
\documentclass{daj}
\dajAUTHORdetails{%
	title = {The structure of multiplicative functions with small partial sums}, 
	author = {Dimitris Koukoulopoulos and K. Soundararajan},
	plaintextauthor = {Dimitris Koukoulopoulos and K. Soundararajan},
	%
	%
	%
	%
	%
	%
	keywords = {multiplicative function, partial sums, partial sums over primes, LSD method, converse theorem},
}   

\dajEDITORdetails{%
	year={2020},
	number={6},
	received={22 October 2019},   
	published={12 May 2020},  
	doi={10.19086/da.11963},       
}   


\usepackage{amssymb,amsmath,amsthm}
\usepackage{mathrsfs} 
\usepackage{bbm} 
\usepackage{enumitem}



\numberwithin{equation}{section}



\newtheorem{mainthm}{Theorem}
\newtheorem{thm}{Theorem}[section]
\newtheorem{lem}[thm]{Lemma}
\newtheorem{prop}[thm]{Proposition}

\theoremstyle{definition}
\newtheorem{dfn}[thm]{Definition}

\theoremstyle{remark}

\newtheorem*{remark*}{Remark}

\newtheoremstyle{claim} 
    {1em}                    
    {1em}                    
    {}                   
    {}                           
    {\bfseries}                   
    {.}                          
    {.5em}                       
    {}  
    
\theoremstyle{claim}



\newcommand{\N}{\mathbb{N}}
\newcommand{\Z}{\mathbb{Z}}

\newcommand{\R}{\mathbb{R}}
\newcommand{\C}{\mathbb{C}}

\newcommand{\CA}{\mathcal{A}}

\newcommand{\CF}{\mathcal{F}}

\newcommand{\CT}{\mathcal{T}}



\newcommand{\bs}\boldsymbol{}

\newcommand{\eq}[2]{ \begin{equation} \label{#1}\begin{split} #2 \end{split} \end{equation} }

\newcommand{\als}[1]{\begin{align*} #1 \end{align*} }

\newcommand{\dee}{\mathrm{d}}

\renewcommand{\tilde}{\widetilde}
\newcommand{\eps}{\varepsilon}
\renewcommand{\phi}{\varphi}

\renewcommand{\Re}{{\rm Re}}

\renewcommand{\bar}[1]{\overline{#1}}

\begin{document}

\begin{frontmatter}[classification=text]
	
	\title{The structure of multiplicative functions with small partial sums} 
	\author[dimitris]{Dimitris Koukoulopoulos\thanks{Supported by the Natural Sciences and Engineering Research Council of Canada (Discovery Grants 435272-2013 and 2018-05699) and by the Fonds de Recherche du Qu\'ebec -- Nature et Technologies (\'Etablissement de nouveaux chercheurs universitaires 2016-NC-189765; Projet de recherche en \'equipe 2019-PR-256442).}}
	\author[sound]{K. Soundararajan\thanks{Supported by the National Science Foundation Grant and through a Simons Investigator Grant from the Simons Foundation. In addition, the paper was completed while the second author was a senior Fellow at the ETH Institute for Theoretical Studies, whom he thanks for their warm and generous hospitality.}}

\begin{abstract}
The Landau-Selberg-Delange method provides an asymptotic formula for the partial sums of a multiplicative function whose average value on primes is a fixed complex number $v$. The shape of this asymptotic implies that $f$ can get very small on average only if $v=0,-1,-2,\dots$. Moreover, if $v<0$, then the Dirichlet series associated to $f$ must have a zero of multiplicity $-v$ at $s=1$. In this paper, we prove a converse result that shows that if $f$ is a multiplicative function that is bounded by a suitable divisor function, and $f$ has very small partial sums, then there must be finitely many real numbers $\gamma_1$, $\dots$, $\gamma_m$ such that $f(p)\approx -p^{i\gamma_1}-\cdots-p^{-i\gamma_m}$ on average. The numbers $\gamma_j$ correspond to ordinates of zeros of the Dirichlet series associated with  $f$, counted with multiplicity. This generalizes a result of the first author, who handled the case when $|f|\le 1$ in previous work.
\end{abstract}
\end{frontmatter}


\section{Introduction}

Throughout this paper $f$ will denote a multiplicative function and 
\[
L(s,f) =\sum_{n=1}^{\infty} \frac{f(n)}{n^s}  
\]
will be its associated Dirichlet series, which is assumed to converge absolutely in $\Re(s)>1$.  We then have
\[
-\frac{L^{\prime}}{L} (s,f) =\sum_{n=1}^{\infty} \frac{\Lambda_f(n)}{n^s}, 
\]
for certain coefficients $\Lambda_f(n)$ that are zero unless $n$ is a prime power.  

Let $D$ denote a  fixed positive integer.  We shall restrict attention to the class of multiplicative functions $f$ such that 
\eq{Lambda_f}{
|\Lambda_f (n) |\le D\cdot \Lambda(n) 
}
for all $n$.   This is a rich class of functions that includes most of the important multiplicative functions that arise in number theory.  
For example, the M{\" o}bius function, the Liouville function, divisor functions, and coefficients of automorphic forms (or if one prefers an axiomatic approach, $L$-functions in the Selberg class) satisfying a Ramanujan bound are all covered by this framework.

When $f(p)\approx v$ in an appropriately strong form, Selberg \cite{selberg} built on ideas of Landau \cite{landau,landau2} to prove that
\eq{Selberg}{
\sum_{n\le x} f(n) = \frac{c(f,v)}{\Gamma(v)} x(\log x)^{v-1} + O_f(x(\log x)^{v-2}) ,
}
where $c(f,v)$ is some a non-zero constant given in terms of an Euler product. Delange \cite{delange} strengthened this theorem to a full asymptotic expansion:  
\[
\sum_{n\le x} f(n) = x(\log x)^{v-1} \sum_{j=0}^{J-1} \frac{c_j(f,v)}{\Gamma(v-j)(\log x)^j}  
	+ O(x(\log x)^{v-1-J})  
\]
for any $J\in\N$, where $c_0(f,v)=c(f,v)$. 
In particular,  if $v\in\{0,-1,-2,\dots\}$, then the partial sums of $f$ satisfy the bound
\eq{f-small}{
\sum_{n\le x} f(n) \ll \frac{x}{(\log x)^A} \qquad(x\ge 2) 
}
for any $A>0$. 

This paper is concerned with the converse problem:  assuming that \eqref{f-small} holds for some $A>D+1$, what can be deduced about $f(p)$? If we already knew that $f(p)\approx v$ on average, then relation \eqref{Lambda_f} would imply that $|v|\le D$. Comparing \eqref{f-small} with \eqref{Selberg}, we conclude that $v\in\{0,-1,-2,\dots,-D\}$. The goal of this paper is to prove such a converse result to the Landau-Selberg-Delange theorem without assuming prior knowledge of the average behavior of $f(p)$.  

This problem was studied in the case $D=1$ by the first author \cite{dk-gafa}.  If \eqref{f-small} holds for some $A>2$, then by partial summation one can see that $L(s,f)$ converges (conditionally) on the line $\Re(s)=1$.   The work in \cite{dk-gafa} established that on the line $\Re(s)=1$ the function $L(s,f)$ can have at most one simple zero.  If $L(1+it, f) \neq 0$ for all $t$, then 
\[
\lim_{x\to\infty} \frac{1}{\pi(x)} \sum_{p\le x} f(p)=0,
\]
while if $L(1+i\gamma,f)=0$ for some (unique) $\gamma\in\R$ then 
\[
\lim_{x\to\infty} \frac{1}{\pi(x)} \sum_{p\le x} (f(p) + p^{i\gamma}) =0 .
\]
In this paper we establish a generalization of this result for larger values of $D$.

\begin{mainthm}\label{converse-thm}
Fix a natural number $D$ and a real number $A>D+2$.  Let $f$ 
be a multiplicative function such that $|\Lambda_f|\le D\cdot\Lambda$, and such that 
\[
\sum_{n\le x}f(n) \ll \frac{x}{(\log x)^A}
\]
for all $x\ge2$. Then there is a unique multiset $\Gamma$ of at most $D$ real numbers such that 
\[
\Big| \sum_{p\le x} \Big( f(p) + \sum_{\substack{ \gamma \in \Gamma \\ |\gamma| \le T}} p^{i\gamma}\Big) \log p \Big| \le C_1 \frac{x}{\sqrt{\log x}}  + C_2 \frac{x}{\sqrt{T}}
\]
for all $x,T\ge2$, where $C_1 = C_1(f,T)$ is a constant depending only on $f$ and $T$, and $C_2$ is an absolute constant.  
In particular,  
\[
\lim_{x\to\infty} \frac{1}{x} \sum_{p\le x}\Big( f(p) + \sum_{\gamma \in \Gamma} p^{i\gamma}\Big) \log p = 0 .
\]
\end{mainthm}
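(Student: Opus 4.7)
The plan is to translate the hypothesis on partial sums into analytic information about $L(s,f)$ near the line $\Re(s)=1$, identify the multiset $\Gamma$ as the ordinates of zeros of $L(s,f)$ on that line (counted with multiplicity), and then recover the prime sum asymptotic via a truncated Perron-type formula. The first step is standard: by partial summation,
\[
L(s,f) = s\int_1^{\infty}\Big(\sum_{n\le x}f(n)\Big)\frac{dx}{x^{s+1}},
\]
so the hypothesis $\sum_{n\le x}f(n)\ll x/(\log x)^A$ with $A>D+2$ extends $L(s,f)$ continuously onto the closed half-plane $\Re(s)\ge 1$, with quantitative decay in vertical strips. Complementarily, $|\Lambda_f|\le D\Lambda$ yields $|L(s,f)|\le \zeta(\Re(s))^D$ in the region of absolute convergence, so $L(s,f)/\zeta(s)^D$ is bounded in a neighbourhood of $\Re(s)=1$.

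The crucial analytic step is to show that $L(s,f)$ has only finitely many zeros on $\Re(s)=1$, with total multiplicity at most $D$. Near a zero $1+i\gamma_j$ of multiplicity $k_j$, the logarithmic derivative $-L'/L(s,f)$ has a simple pole of residue $-k_j$; since $-L'/L(s,f)=\sum_n\Lambda_f(n)/n^s$ is dominated coefficient-wise by $-D\zeta'/\zeta(s)$, which has residue $D$ at its simple pole at $s=1$, a mean-value/Plancherel argument on a short vertical segment of $\Re(s)=1$ (exploiting both the coefficient domination and the decay of $L(s,f)$ from the small-partial-sum hypothesis) forces $\sum_j k_j\le D$. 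Collecting these ordinates with multiplicity produces the multiset $\Gamma$ promised in the theorem; uniqueness follows from the fact that the $\gamma_j$ are intrinsic to the zero set of $L(s,f)$.

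With $\Gamma$ in hand, I would form the corrected Dirichlet series $M(s):=L(s,f)\cdot\prod_{\gamma\in\Gamma,\,|\gamma|\le T}\zeta(s-i\gamma)$, whose logarithmic derivative encodes $\Lambda_f(n)+\sum_{\gamma\in\Gamma,\,|\gamma|\le T}n^{i\gamma}\Lambda(n)$; this is precisely the generating series for the sum in the target bound, up to the higher-prime-power contribution $O(\sqrt{x}\log x)$ which is absorbed. Applying a Perron-type formula truncated at height $T$ to $-M'/M(s)$ and shifting the contour just to the left of $\Re(s)=1$ (permissible since the obstructing zeros have been cancelled by the $\zeta(s-i\gamma)$-poles) contributes $C_1(f,T)x/\sqrt{\log x}$ from the quantitative extended analyticity of $L(s,f)$ in a thin strip, while the standard truncation error gives $C_2 x/\sqrt{T}$.

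The principal obstacle is the zero-multiplicity bound $\sum_j k_j\le D$ together with the quantitative control of $L(s,f)/\zeta(s)^D$ in a neighbourhood of the critical line. I expect this is most cleanly established by induction on $D$: at each stage one isolates a zero, divides through by a suitable $\zeta(s-i\gamma)^{-1}$ factor, and observes that the resulting multiplicative function still satisfies a coefficient bound of the form $|\Lambda|\le (D-1)\Lambda$, so the base case $D=1$ treated in \cite{dk-gafa} applies. Making this inductive scheme quantitative, with implicit constants that track the truncation parameter $T$ correctly, is where the technical heart of the argument should lie.
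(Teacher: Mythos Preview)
Your outline contains two genuine gaps, each of which blocks the argument as written.

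\textbf{No analytic continuation past $\Re(s)=1$.} You propose to apply Perron to $-M'/M$ and then ``shift the contour just to the left of $\Re(s)=1$''. This is not available. The hypothesis $\sum_{n\le x}f(n)\ll x/(\log x)^A$, via partial summation, only gives that $L(s,f)$ (and finitely many derivatives) extends \emph{continuously} to the closed half-plane $\Re(s)\ge 1$; there is no analytic continuation into any strip $\Re(s)>1-\delta$. Thus there is no contour to shift to, and the phrase ``quantitative extended analyticity of $L(s,f)$ in a thin strip'' has no content here. The paper works entirely on the line $\Re(s)=c=1+1/\log x$, never leaving the half-plane of absolute convergence. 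For small $|t|\le T$ one uses only that $L(s,f_{\widetilde\Gamma})$ and two of its derivatives are continuous and that $|L(s,f_{\widetilde\Gamma})|$ is bounded away from zero on the compact segment $[1-iT,1+iT]$. For large $|t|$ the paper bounds the integrand on $\Re(s)=c$ directly, via an $L^2$ mean-value estimate for $G_j(s)=(-1)^jL^{(j)}/L(s,f)$ (Lemma~\ref{lem2.5}, proved by a Brun--Titchmarsh-type bound on $\Lambda_j$ in short intervals) together with an iteration in $j$ that trades derivatives for shifts in $\sigma$.

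\textbf{The induction on $D$ does not close.} You suggest proving $\sum_j k_j\le D$ by peeling off a zero: divide $L(s,f)$ by $\zeta(s-i\gamma)$ and claim the new function lies in $\mathcal F(D-1)$. But convolving $f$ with the coefficients of $\zeta(s-i\gamma)^{\pm 1}$ gives $\Lambda_{f_1}(n)=\Lambda_f(n)\pm n^{i\gamma}\Lambda(n)$, and without alignment of phases one can only say $|\Lambda_{f_1}|\le (D+1)\Lambda$, not $(D-1)\Lambda$; the inductive hypothesis is lost. The paper proves the multiplicity bound by an entirely different device: it studies
\[
\lambda_N(x)=\frac{1}{\log\log x}\,\Re\sum_{p\le x}\frac{f(p)}{p}\,\bigl|1+p^{i\gamma_1}+\cdots+p^{i\gamma_k}\bigr|^{2N},
\]
bounds it below by $-DA_N(0)+o(1)$ using $|f(p)|\le D$, bounds it above by $-\sum_\ell m_\ell A_N(\gamma_\ell)+o(1)$ using Lemma~\ref{lem:logL}, and then shows $A_N(\gamma_\ell)\ge (1-\eps)A_N(0)$ for $N$ large. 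This forces $\sum_\ell m_\ell\le D$.

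In short, the skeleton ``identify $\Gamma$ as zeros, form the corrected series, apply Perron'' is right, but both the mechanism for bounding multiplicities and the mechanism for extracting the prime sum need to be replaced by arguments that stay in $\Re(s)\ge 1$.
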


 The multiset $\Gamma$ consists of the ordinates of the zeros of $L(s,f)$ on the line $\Re(s)=1$, repeated according to their multiplicity. Its rigorous construction is described in Proposition \ref{Prop1}. The constant $C_1=C_1(f,T)$ in Theorem \ref{converse-thm} can be calculated explicitly in terms of upper bounds for the Dirichlet series $L(s,f) \prod_{\gamma \in \Gamma, |\gamma|\le T} \zeta(s-i\gamma)$ and its derivatives, together with a lower bound for this quantity on the line segment $[1-iT, 1+iT]$. 
 
Qualitatively Theorem \ref{converse-thm} establishes the kind of converse theorem that we seek. There are two deficiencies in the theorem: first, the range $A > D+2$ falls short of the optimal result $A > D+1$ (which in the case $D=1$ was attained in \cite{dk-gafa}, and which we can attain in a 
special case -- see Section 5); and second, one would like an understanding of the uniformity with which the result holds.   On the other hand, the proof that we present is very simple, and we postpone the considerably more involved arguments needed for more precise versions of the theorem to another occasion.

\subsection*{Acknowledgements} We would like to thank Hongze Li for catching an error in an earlier version of the paper, and the referee of the paper for simplifying the proof of Lemma 4.2(a).


\section{The classes $\CF(D)$ and $\CF(D;A)$}


We introduce the classes of multiplicative functions ${\mathcal F}(D)$ and ${\mathcal F}(D;A)$, and establish some 
preliminary results.    Throughout $\tau_D$ will denote the $D$-th divisor function, which arises as the Dirichlet series coefficients of $\zeta(s)^D$.

\begin{dfn} \label{def1}  Given a natural number $D$, we denote by $\CF(D)$ the class of all multiplicative functions such that $|\Lambda_f|\le D\cdot\Lambda$.
\end{dfn}

\begin{lem}\label{lem:f-bound} Let $f$ be an element of ${\mathcal F}(D)$.  Then its inverse under Dirichlet convolution $g$ is also 
in ${\mathcal F}(D)$, and both $|f(n)|$ and $|g(n)|$ are bounded by $\tau_D(n)$ for all $n$. 
\end{lem}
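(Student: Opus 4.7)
The plan is to reduce everything to a recursion on prime powers. Since $f$ and $g$ are both multiplicative, it suffices to control their values on prime powers, and the bound by $\tau_D$ will follow because $\tau_D$ is multiplicative.

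First I would deal with the easier claim that $g\in\CF(D)$. Since $f*g=\delta$, we have $L(s,f)L(s,g)=1$ as Dirichlet series, so taking logarithmic derivatives gives $-L'(s,g)/L(s,g) = L'(s,f)/L(s,f)$, which means $\Lambda_g(n) = -\Lambda_f(n)$ for all $n$. In particular $|\Lambda_g|\le D\cdot \Lambda$, so $g\in\CF(D)$. Thus the bound $|g(n)|\le \tau_D(n)$ will follow from the same argument that gives $|f(n)|\le\tau_D(n)$, applied to $g$.

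Next I would fix a prime $p$ and derive a recurrence for $f(p^k)$. Set $F(x)=\sum_{k\ge 0} f(p^k) x^k$, a formal power series, and let $\lambda_k:=\Lambda_f(p^k)/\log p$, so that by hypothesis $|\lambda_k|\le D$. Substituting $x=p^{-s}$ into the identity $-L_p'(s)/L_p(s)=\sum_{k\ge1}\Lambda_f(p^k)p^{-ks}$, where $L_p$ is the local Euler factor, yields (after changing variables) the formal identity
\[
x F'(x) = F(x)\cdot \sum_{k\ge 1}\lambda_k x^k.
\]
Comparing the coefficient of $x^n$ on both sides gives the recurrence
\[
n\, f(p^n) = \sum_{k=1}^{n}\lambda_k\, f(p^{n-k}) \qquad (n\ge 1),
\]
with $f(p^0)=1$.

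The same identity, applied to $\tau_D$, gives $n\,\tau_D(p^n) = D\sum_{k=1}^n \tau_D(p^{n-k})$, since the analogous $\lambda_k$ for $\tau_D$ equals $D$ for every $k$ (as $\zeta(s)^D$ has log derivative $D\,\zeta'/\zeta$). I would then prove $|f(p^n)|\le \tau_D(p^n)$ by induction on $n$: the base case $n=0$ is trivial, and for the inductive step
\[
n\,|f(p^n)| \le \sum_{k=1}^n |\lambda_k|\,|f(p^{n-k})| \le D\sum_{k=1}^{n}\tau_D(p^{n-k}) = n\,\tau_D(p^n).
\]
Multiplicativity of $|f|$ and $\tau_D$ then yields $|f(n)|\le \tau_D(n)$ for all $n$, and the same argument applied to $g$ (using $|\Lambda_g|\le D\Lambda$) finishes the lemma. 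There is no real obstacle here; the only thing to be careful about is the bookkeeping when translating the logarithmic derivative of the Dirichlet series into a recurrence in the local variable $x=p^{-s}$.
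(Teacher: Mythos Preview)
Your proof is correct. The argument that $g\in\CF(D)$ via $\Lambda_g=-\Lambda_f$ is identical to the paper's; the difference lies in how the bound $|f(n)|\le\tau_D(n)$ is obtained.

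The paper works globally: it expands $L(s,f)=\exp\big\{\sum_{n\ge2}\Lambda_f(n)/(n^s\log n)\big\}$ as a power series in the exponential, obtaining the formula
\[
f(n)=\sum_{j\ge1}\frac{1}{j!}\sum_{n_1\cdots n_j=n}\frac{\Lambda_f(n_1)\cdots\Lambda_f(n_j)}{\log n_1\cdots\log n_j},
\]
and then compares term by term with the analogous expansion of $\zeta(s)^D$. You instead localize at each prime, translate the logarithmic derivative into the recurrence $n\,f(p^n)=\sum_{k=1}^n\lambda_k f(p^{n-k})$, and run an induction against the corresponding recurrence for $\tau_D(p^n)$. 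Your approach is more elementary---it avoids the exponential formula entirely and needs only a one-line induction---while the paper's approach has the advantage of giving an explicit closed formula \eqref{f-Lambda_f} for $f(n)$ in terms of the $\Lambda_f$, which could be useful elsewhere. Both arguments are short and of comparable strength for the purpose at hand.
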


\begin{proof} Note that 
\[
L(s,f) = \exp\bigg\{ \sum_{n=2}^\infty \frac{\Lambda_f(n)}{n^s\log n} \bigg\}
	= \sum_{j=0}^\infty \frac{1}{j!} \bigg( \sum_{n=1}^\infty \frac{\Lambda_f(n)}{n^s\log n} \bigg)^j, 
\]
so that by comparing coefficients  
\eq{f-Lambda_f}{
f(n) = \sum_{j=1}^\infty \frac{1}{j!} \sum_{n_1\cdots n_j=n} \frac{\Lambda_f(n_1)\cdots \Lambda_f(n_j)}{\log n_1\cdots \log n_j} .
}
Thus, by the definition of ${\mathcal F}(D)$, $|f(n)|$ is bounded by the coefficients of 
\[
\sum_{j=0}^{\infty} \frac 1{j!} \bigg( \sum_{n=1}^{\infty} \frac{D\Lambda(n)}{n^s \log n} \bigg)^{j} = \zeta(s)^D.
\] 
This shows that $|f(n)| \le \tau_D(n)$ for all $n$.  Since the inverse $g$ may be defined by setting $\Lambda_g(n)  = -\Lambda_f(n)$, 
it follows that $g$ is in ${\mathcal F}(D)$ and that $|g(n)| \le \tau_D(n)$ as well. 
\end{proof}

For later use, let us record that if $f\in {\mathcal F}(D)$, then for $\sigma >1$ we have 
\eq{2.2}
{\log L(s, f) = \sum_{n=2}^{\infty} \frac{\Lambda_f(n)}{n^s \log n} = \sum_{p} \frac{\Lambda_f(p)}{p^s \log n} + O(1)
= \sum_{p} \frac{f(p)}{p^s}+ O(1). 
}

We now introduce the class ${\mathcal F}(D;A)$, which is the subclass of multiplicative functions in ${\mathcal F}(D)$ 
with small partial sums.

\begin{dfn}  \label{def2}  Given a natural number $D$, and positive real numbers $A$ and $K$, we 
	denote by ${\mathcal F}(D;A,K)$ the class of functions $f\in \CF(D)$ such that
\[
	\Big| \sum_{n\le x} f(n) \Big| \le K \frac{x}{(\log x)^{A}}\qquad\text{for all}\quad x\ge 3.
\]
	The class ${\mathcal F}(D;A)$ consists of all functions lying in ${\mathcal F}(D;A,K)$ for some 
	constant $K$. 
\end{dfn}

The following proposition about the class $\CF(D;A)$ is an important stepping stone in the proof of Theorem \ref{converse-thm}.  
In particular, it gives a description of the multiset $\Gamma$ appearing in Theorem \ref{converse-thm}.  

\begin{prop} \label{Prop1}Suppose $f$ is in the class ${\mathcal F}(D;A)$ with $A >D+1$.  
\begin{enumerate}
	\item The series $L(s,f)$ and the series of derivatives $L^{(j)}(s,f)$ with $1\le j < A-1$ all converge uniformly in compact subsets of the region $\Re(s)\ge 1$.  
	
	\item  For any real number $\gamma$, there exists an integer $j\in[0,D]$ with $L^{(j)}(1+i\gamma, f) \neq 0$.  If $L(1+i\gamma,f)=0$, then $1+i\gamma$ is called a \emph{zero} of $L(s,f)$ and the \emph{multiplicity} of this zero is the smallest natural number $j$ with $L^{(j)}(1+i\gamma,f) \neq 0$.   
	
	\item  Counted with multiplicity, $L(s,f)$ has at most $D$ zeros on the line $\Re(s) =1$.  
	
	\item  Let $\Gamma$ denote the (possibly empty) multiset of ordinates $\gamma$ of zeros $1+i\gamma$ of $L(s,f)$, so that $\Gamma$ has cardinality at most $D$.  Let 
	${\widetilde \Gamma}$ denote a (multi-)subset of $\Gamma$, and let $\widetilde{m}$ denote the largest multiplicity of an element in $\widetilde{\Gamma}$.  
	The Dirichlet series 
	\[
	L(s,f_{\widetilde \Gamma}) = L(s,f) \prod_{\gamma \in {\widetilde \Gamma}} \zeta(s-i\gamma ) 
	\]
	and the series of derivatives $L^{(j)} (s,f_{\widetilde \Gamma})$ for $1 \le j <  A-{\widetilde m}-1$ all converge uniformly in compact subsets of the region $\Re(s)\ge 1$.
\end{enumerate}
\end{prop}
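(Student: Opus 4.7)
The plan is to prove parts (a)--(d) in order, using standard techniques for (a), a positivity trick for (b), a more delicate argument for (c), and a convolution estimate for (d).

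For (a), I would apply partial summation. Writing $F(u) = \sum_{n \le u} f(n)$ with $|F(u)| \le Ku/(\log u)^A$, the identity $L(s,f) = s \int_1^\infty F(u) u^{-s-1} \, du$ combined with $A > 1$ yields uniform convergence on compact subsets of $\Re(s) \ge 1$. For the derivatives $L^{(j)}(s,f)$, an analogous computation introduces a $(\log u)^j$ factor in the integrand, requiring $A > j+1$ for absolute convergence, which matches the stated range $j < A-1$.

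For (b), my strategy is a positivity argument. Define the auxiliary multiplicative function $h_\gamma$ via $L(s, h_\gamma) = L(s+i\gamma, f) \zeta(s)^D$, so that $\Lambda_{h_\gamma}(n) = \Lambda_f(n) n^{-i\gamma} + D\Lambda(n)$. Since $|\Lambda_f(n)| \le D\Lambda(n)$, we have $\Re \Lambda_{h_\gamma}(n) \ge 0$, and by the identity (2.2) this forces $|L(\sigma, h_\gamma)| = \exp\bigl(\Re \log L(\sigma, h_\gamma)\bigr) \ge 1$ for real $\sigma > 1$. Hence $|L(\sigma+i\gamma, f)| \ge 1/\zeta(\sigma)^D \gg (\sigma-1)^D$ as $\sigma \to 1^+$. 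If $L^{(j)}(1+i\gamma, f) = 0$ for all $0 \le j \le D$, then (a) and Taylor expansion give $|L(\sigma+i\gamma, f)| \ll (\sigma-1)^{D+1}$, contradicting the lower bound.

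Part (c) is where I expect the main difficulty. Part (b) alone only gives $m_j \le D$ at each individual zero, but the stronger total bound $\sum_j m_j \le D$ is more delicate. A naive extension---applying (b) to each $\gamma_k$ and multiplying---yields only $M \le rD$, where $r$ is the number of distinct zeros. My plan is to exploit the Dirichlet inverse $g = f^{-1}$: by Lemma 2.1, $|g(n)| \le \tau_D(n)$, so $L(s,g) = 1/L(s,f)$ has poles of orders $m_j$ at $1+i\gamma_j$ while satisfying $|L(\sigma+it, g)| \le \zeta(\sigma)^D$ uniformly in $t$. The goal would then be to extract a global constraint on the total pole mass---perhaps via a Montgomery--Vaughan mean-value estimate for $\int_{-T}^T |L(1+\epsilon+it, g)|^2\,dt$ combined with lower bounds on $|L|^2$ in small neighborhoods of the poles---that forces $\sum_j m_j \le D$. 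Alternatively, one might set up a joint positivity argument, considering the multiplicative function defined by $L(s,f) L(s,\bar f) \prod_j \zeta(s-i\gamma_j)^{m_j} \zeta(s+i\gamma_j)^{m_j}$ together with an appropriate power of $\zeta(s)$, and use Landau's theorem on Dirichlet series with non-negative coefficients to push analyticity past the line $\Re(s) = 1$.

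For (d), I would combine (c) with a direct partial-sum estimate for $f_{\tilde \Gamma}$. The function $f_{\tilde \Gamma}$ is the Dirichlet convolution of $f$ with the coefficient sequence of $\prod_{\gamma \in \tilde \Gamma} \zeta(s-i\gamma)$. Applying a hyperbola-style decomposition (splitting according to the sizes of the factors) and using the vanishing $L^{(j)}(1+i\gamma, f) = 0$ for $j$ less than the multiplicity of $\gamma$ in $\tilde \Gamma$, I expect the bound $|\sum_{n \le x} f_{\tilde \Gamma}(n)| \ll x/(\log x)^{A - \tilde m}$. Applying part (a) to $f_{\tilde \Gamma}$ with this weakened exponent then yields the uniform convergence of $L^{(j)}(s, f_{\tilde \Gamma})$ for $j < A - \tilde m - 1$.
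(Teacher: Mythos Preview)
Your approaches to parts (a), (b), and (d) are essentially those of the paper: partial summation for (a), a Taylor expansion against the lower bound $|L(\sigma+i\gamma,f)|\gg(\sigma-1)^D$ for (b), and a hyperbola decomposition exploiting the vanishing of $L^{(j)}(1+i\gamma,f)$ for (d). One refinement worth noting: from the argument in (b) the paper also extracts the quantitative statement
\[
\Big|\sum_{p\le x}\frac{m+\Re(f(p)p^{-i\gamma})}{p}\Big|\le C(f,\gamma),
\]
where $m$ is the multiplicity at $1+i\gamma$. This follows from your Taylor step by taking logarithms, and it is the crucial input for (c).

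The genuine gap is in part (c). Neither of your two sketches leads to $\sum_j m_j\le D$. The mean-value idea fails quantitatively: the pointwise bound $|1/L(\sigma+it,f)|\le\zeta(\sigma)^D$ only yields $m_j\le D$ at each zero separately, and a second-moment bound for $L(\sigma+it,g)$ is controlled by $\sum_n\tau_D(n)^2/n^{2\sigma-1}\asymp(\sigma-1)^{-D^2}$, which is far too crude to constrain the total multiplicity. The Landau-type product you propose does not have non-negative Dirichlet coefficients, and it is unclear how to repair it. The paper's device, which you are missing, is an amplification weight coupling all the zeros at once: one studies
\[
\lambda_N(x)=\frac{1}{\log\log x}\,\Re\sum_{p\le x}\frac{f(p)}{p}\,\bigl|1+p^{i\gamma_1}+\cdots+p^{i\gamma_k}\bigr|^{2N}.
\]
The bound $|f(p)|\le D$ gives $\lambda_N(x)\ge -DA_N(0)+o(1)$, where $A_N(\gamma)=\lim_{T\to\infty}\frac{1}{2T}\int_{-T}^T|1+e^{it\gamma_1}+\cdots+e^{it\gamma_k}|^{2N}e^{it\gamma}\,\dee t$. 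Expanding the weight and using the prime-sum estimate above gives $\lambda_N(x)\le -\sum_\ell m_\ell A_N(\gamma_\ell)+o(1)$. A separate combinatorial lemma shows $A_N(\gamma_\ell)\ge(1-\varepsilon)A_N(0)$ once $N$ is large, and comparing the two bounds forces $\sum_\ell m_\ell\le D$. Your instinct that positivity is the key is correct, but the specific positive object---the $2N$-th power weight on primes rather than a Dirichlet series with non-negative coefficients---is the idea you have not located.
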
 

We next establish the following lemma which contains part (a) of Proposition \ref{Prop1} and more.  The remaining parts of the proposition will 
be established in Section 4.   

\begin{lem} \label{lem:L-der-bound}
	Let $f \in {\mathcal F}(D;A,K)$ with $A >1$, and consider an integer $j\in[0,A-1)$. 
	For any $M\ge N \ge 3$ and any $s\in\C$ with $\Re(s)\ge1$, we have 
	\begin{equation}\label{L-der-bound1}
	\bigg| \sum_{N<n\le M} \frac{f(n)(\log n)^j}{n^s}\bigg|  \ll_{A,D} \frac{K(1+|s|)}{(\log N)^{A-j-1}}. 
	\end{equation}
	In particular, the series $L^{(j)}(s,f)$ converges uniformly in compact subsets of the region $\Re(s)\ge 1$.   
	Furthermore, it	satisfies the pointwise bound 
	\begin{equation}\label{L-der-bound2}
	|L^{(j)}(\sigma+ it, f)| \ll_{A,D} (K(1+|t|))^{\frac{D+j}{D+A-1}}
	\end{equation}
	for $s=\sigma+it$ with $\sigma\ge1$ and $t\in\R$.
\end{lem}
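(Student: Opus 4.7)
My plan is to derive the partial-sum estimate \eqref{L-der-bound1} by Abel summation, and then obtain the pointwise bound \eqref{L-der-bound2} by splitting $L^{(j)}(s,f)$ at a well-chosen cutoff $N$: bound the tail via \eqref{L-der-bound1} and the head trivially using $|f(n)|\le\tau_D(n)$ from Lemma \ref{lem:f-bound}, then optimize the split point.

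For \eqref{L-der-bound1}, set $S(u):=\sum_{n\le u}f(n)$, so by hypothesis $|S(u)|\le Ku/(\log u)^A$ for $u\ge 3$. Writing $h(u)=(\log u)^j/u^s$, Abel summation yields
\[
\sum_{N<n\le M}\frac{f(n)(\log n)^j}{n^s} = S(M)h(M)-S(N)h(N)-\int_N^M S(u)h'(u)\,\dee u.
\]
For $\sigma\ge 1$ each boundary term is at most $K/(u^{\sigma-1}(\log u)^{A-j})\le K/(\log N)^{A-j-1}$ (using $\log N\ge\log 3>1$). From $|h'(u)|\le(j+|s|)(\log u)^j/u^{\sigma+1}$ and $u^\sigma\ge u$, the integral is bounded by
\[
K(j+|s|)\int_N^\infty\frac{\dee u}{u(\log u)^{A-j}} \ll_{A,j} \frac{K(1+|s|)}{(\log N)^{A-j-1}},
\]
since $j<A-1$ makes the integral convergent. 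Letting $M\to\infty$ then forces uniform convergence of $L^{(j)}(s,f)$ in compact subsets of $\Re(s)\ge 1$.

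For \eqref{L-der-bound2}, I may assume $K\ge 1$ (else replace $K$ by $1$ in the hypothesis) and $1\le\sigma\le 2$, since for $\sigma\ge 2$ the absolutely convergent bound $\sum_n\tau_D(n)(\log n)^j/n^2=O_{D,j}(1)$ is already absorbed into $(K(1+|t|))^{(D+j)/(D+A-1)}$. Under these reductions $|s|\le 2+|t|\ll 1+|t|$. For a cutoff $N\ge 3$ to be chosen, split
\[
L^{(j)}(s,f) = (-1)^j\sum_{n\le N}\frac{f(n)(\log n)^j}{n^s} + (-1)^j\sum_{n>N}\frac{f(n)(\log n)^j}{n^s}.
\]
By \eqref{L-der-bound1}, the tail is $\ll K(1+|t|)/(\log N)^{A-j-1}$. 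For the head, $|f(n)|\le\tau_D(n)$ together with the elementary estimate $\sum_{n\le N}\tau_D(n)/n\ll(\log N)^D$ gives $\sum_{n\le N}\tau_D(n)(\log n)^j/n\ll(\log N)^{D+j}$. Choosing $\log N=\max\bigl(\log 3,\,(K(1+|t|))^{1/(D+A-1)}\bigr)$ balances the two terms and produces the claimed bound.

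The main obstacle is essentially bookkeeping: there is no deep idea, only the need to track how the implicit constants depend on $s,K,j,A,D$ through the partial summation, and to handle the corner cases (large $\sigma$, the lower constraint $N\ge 3$, and the normalization $K\ge 1$) so that the clean closed-form exponent $(D+j)/(D+A-1)$ emerges in the balance.
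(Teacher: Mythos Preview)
Your proof is correct and follows essentially the same approach as the paper: Abel summation for \eqref{L-der-bound1}, then a head--tail split at a cutoff $N$ with the head bounded via $|f|\le\tau_D$ and the tail via \eqref{L-der-bound1}, optimizing $\log N\asymp (K(1+|t|))^{1/(D+A-1)}$. The only cosmetic differences are that the paper first disposes of the region $\sigma\ge 2$ for \emph{both} estimates (you do so only for \eqref{L-der-bound2}, which is fine since your Abel summation already works uniformly in $\sigma\ge 1$), and that you are slightly more explicit about the corner cases $K<1$ and $N\ge 3$.
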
 

\begin{proof} Since $|f|\le\tau_D$ by Lemma \ref{lem:f-bound}, all claims follow in the region $\Re(s)\ge2$ from the bound $\sum_{n>N}\tau_D(n)/n^2\ll_D(\log N)^{D-1}/N$. 
	
Let us now assume we are in the region $1\le \Re(s)\le2$. Using partial summation, we have
	\begin{align} 
	\sum_{N<n\le M} f(n) \frac{(\log n)^j}{n^{\sigma+it}}  
	&= \Big(\sum_{N<n\le M} f(n)\Big) \frac{(\log M)^j}{M^{\sigma+it}} - \int_N^M \Big( \sum_{N<n\le y} f(n) \Big) 
	\bigg( \frac{(\log y)^j}{y^{\sigma+ it}} \bigg)^{\prime} \dee y.
	\label{eqn:f-der-ps}
	\end{align} 
We estimate both terms on the right-hand side of \eqref{eqn:f-der-ps} using our assumption on the partial sums of $f$, thus obtaining that
	\begin{align*} 
	\sum_{N<n\le M} f(n) \frac{(\log n)^j}{n^{\sigma+it}}  
		&\ll \frac{K}{M^{\sigma-1}(\log M)^{A-j}}+ \int_N^M \frac{Ky}{(\log y)^A}\cdot  \frac{(\log y)^j (1+|t|)}{y^{\sigma +1}} \dee y \label{PS}\\
		&\ll \frac{K(1+|t|)}{(\log N)^{A-j-1}}. \nonumber
	\end{align*}
This establishes \eqref{L-der-bound1}. In particular, $L^{(j)}(s,f)$ converges uniformly in compact subsets of the region $1\le\Re(s)\le2$ by Cauchy's criterion.
	
To obtain \eqref{L-der-bound2}, we let $M\to\infty$ in \eqref{L-der-bound1} to find that 
	\begin{equation}\label{L-der-bound3}
	L^{(j)}(s,f) = \sum_{n\le N} \frac{f(n)(-\log n)^j}{n^s}+ O_{A,D}\bigg(\frac{K(1+|t|)}{(\log N)^{A-j-1}}\bigg) .
	\end{equation}
Since $|f(n)|\le \tau_D(n)$ by Lemma \ref{lem:f-bound}, the first term on the right side of \eqref{L-der-bound3} is bounded in size by 
	\[
	(\log N)^j \sum_{n\le N} \frac{\tau_D(n)}{n} \ll (\log N)^{D+j}. 
	\] 
Choosing $N = \exp((K(1+|t|)^{\frac{1}{D+A-1}})$ yields the desired bound. 
\end{proof}


\section{Two lemmas} 


Here we collect together a couple of disparate lemmas that will be used in the future.  Both of them are
of a standard nature, and proofs are provided for completeness. We begin with an asymptotic formula for partial sums of generalized divisor functions.  

\begin{lem} 
\label{lem:DivisorSums}
 Let ${\mathcal A} = \{ \alpha_1, \ldots, \alpha_m\}$ be a multiset, consisting of $k$ distinct elements, and 
 arranged so that $\alpha_1,\dots,\alpha_k$ denote these $k$ distinct values.  Suppose that 
these distinct values $\alpha_j$ appear in ${\mathcal A}$ with multiplicity $m_j$.   Let $\tau_{\mathcal A}(n)$ denote 
the multiplicative function 
\[
\tau_{\mathcal A}(n) = \sum_{d_1 \cdots d_m =n} d_1^{i\alpha_1} \cdots d_{m}^{i\alpha_m}. 
\]
Then for large $x$ we have
\[
\sum_{n\le x} \tau_{\mathcal A}(n) = \sum_{j=1}^{k} x^{1+i\alpha_j} P_{j,{\mathcal A}}(\log x) + O(x^{1-\delta}), 
\]
 where $P_{j, \CA}$ denotes a polynomial of degree $m_j-1$ with coefficients depending on $\CA$, and $\delta=\delta(\CA)$ is 
 some positive real number.
 \end{lem}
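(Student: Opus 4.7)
The plan is to apply Perron's formula to the Dirichlet series
\[
F(s) := \sum_{n=1}^{\infty} \frac{\tau_{\CA}(n)}{n^s} = \prod_{j=1}^{k} \zeta(s-i\alpha_j)^{m_j},
\]
and then shift the contour past the poles into the critical strip. Note that $F(s)$ is meromorphic on $\Re(s)>0$ with a pole of order $m_j$ at $s=1+i\alpha_j$ (coming from the simple pole of $\zeta$ at $s=1$) and is otherwise holomorphic there; the zeros of $\zeta$ pose no problem since the zeta factors appear only in the numerator.

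First I apply the truncated Perron formula at $c=1+1/\log x$ with cutoff $T\ge 1$ to obtain
\[
\sum_{n\le x}\tau_{\CA}(n) = \frac{1}{2\pi i}\int_{c-iT}^{c+iT} F(s)\,\frac{x^s}{s}\,\dee s + O\Big(\frac{x(\log x)^m}{T}\Big),
\]
where the error uses the trivial bound $|\tau_{\CA}(n)|\le \tau_m(n)$ (by the triangle inequality, since each factor $d_j^{i\alpha_j}$ has modulus one). I then shift the contour to $\Re(s)=1/2$, picking up residues at the poles $s=1+i\alpha_j$ for $j=1,\ldots,k$, which lie inside the contour as soon as $T>\max_j|\alpha_j|$. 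Expanding $x^s=x^{1+i\alpha_j}\sum_{l\ge 0}(s-1-i\alpha_j)^l(\log x)^l/l!$ together with the Taylor series of the analytic factor $[(s-1-i\alpha_j)\zeta(s-i\alpha_j)]^{m_j}\prod_{l\ne j}\zeta(s-i\alpha_l)^{m_l}/s$ at $s=1+i\alpha_j$, the residue there is seen to take the form $x^{1+i\alpha_j}P_{j,\CA}(\log x)$, with $P_{j,\CA}$ a polynomial of degree exactly $m_j-1$ (the leading coefficient, proportional to $\prod_{l\ne j}\zeta(1+i(\alpha_j-\alpha_l))^{m_l}/(1+i\alpha_j)$, is nonzero since $\zeta$ has no zeros on $\Re(s)=1$ away from $s=1$) whose coefficients depend only on $\CA$.

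The remaining integrals, on the vertical line $\Re(s)=1/2$ and on the horizontal segments at heights $\pm T$, are controlled via the convexity estimate $|\zeta(\sigma+it)|\ll (|t|+3)^{(1-\sigma)/2+\eps}$ for $\sigma\in[1/2,1]$, together with $|\zeta(s)|=O(1)$ for $\Re(s)\ge 1+1/\log x$. These yield $|F(\sigma+it)|\ll_{\CA} (|t|+3)^{m(1-\sigma)/2+m\eps}$ uniformly, so the vertical integral is $\ll x^{1/2}T^{m/4+m\eps}$ and the horizontal integrals are $\ll xT^{-1+m\eps}$. Choosing $T=x^{2/(m+4)}$ balances these contributions against the Perron error, and the total bound becomes $O(x^{1-\delta})$ with, say, $\delta=1/(m+4)-\eps$, completing the proof.

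The main obstacle is the balance of parameters: the convexity bound grows as a power of $T$ depending on the total multiplicity $m=\sum_j m_j$, so the resulting $\delta=\delta(\CA)$ shrinks as $m$ grows and one must verify it remains strictly positive. The calculation above shows that the convexity bound is enough, but it also makes clear that a uniform or quantitatively sharp dependence would require subconvex bounds for $\zeta$, which are inessential here.
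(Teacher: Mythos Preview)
Your proof is correct and follows essentially the same approach as the paper: Perron's formula for the Dirichlet series $\prod_{j=1}^{k}\zeta(s-i\alpha_j)^{m_j}$, followed by a contour shift past the line $\Re(s)=1$ and evaluation of the residues at $1+i\alpha_j$. The paper leaves the details implicit, whereas you supply them (convexity bounds, parameter balancing); one tiny imprecision is the claim that $\zeta(s)=O(1)$ for $\Re(s)\ge 1+1/\log x$, which is false near $s=1$, but since your horizontal segments sit at height $\pm T$ this does not affect the argument.
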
   
 \begin{proof}   Note that in the region $\Re(s)> 1$ 
\[
 \sum_{n=1}^{\infty} \frac{\tau_{\mathcal A}(n)}{n^s}  = \prod_{j=1}^{k} \zeta(s-i\alpha_j)^{m_j}. 
\]
Now the lemma follows by a standard application of Perron's formula to write (with $c>1$)  
\[
\sum_{n\le x} \tau_{\mathcal A}(n) = \frac{1}{2\pi i} \int_{c-i\infty}^{c+i\infty} \prod_{j=1}^{k} \zeta(s-i\alpha_j)^{m_j}  \frac{x^s}{s} \dee s, 
\]
and then shifting contours to the left of the $1$-line and evaluating the residues of the poles of order $m_j$ at $1+i\alpha_j$.  
\end{proof}

Our second and final lemma gives a variant of the Brun--Titchmarsh theorem for primes in short intervals.   Define $\Lambda_j(n)$ by means of 
$$ 
(-1)^{j} \frac{\zeta^{(j)}(s)}{\zeta(s)} = \sum_{n} \frac{\Lambda_j(n)}{n^s}. 
$$ 
Thus $\Lambda_0(n)=1$ if $n=1$ and $0$ for $n >1$, while $\Lambda_1(n) = \Lambda(n)$ is the usual von Mangoldt function.   Using the identity
\begin{equation} 
\label{N3.1}
(-1)^{j+1} \frac{\zeta^{(j+1)}}{\zeta} = - \Big( (-1)^j \frac{\zeta^{(j)}}{\zeta}\Big)^{\prime} + \Big( -\frac{\zeta^{\prime}}{\zeta} \Big) 
\Big( (-1)^j \frac{\zeta^{(j)}}{\zeta}\Big),
\end{equation}  
 one can check easily that $\Lambda_j(n) \ge 0$ for all $j$ and $n$. In addition, $\Lambda_j(n)$ is supported on integers composed of at most $j$ distinct prime factors, and is bounded by $C_j (\log n)^j$ on such integers for a suitable constant $C_j$.

\begin{lem}\label{lem:BT}
Fix $\eps>0$ and $j\in\N$. Uniformly for $x\ge2$ and $x^{\eps} < y \le x$, we have
\[
\sum_{x < n \le x+y} \Lambda_j(n) \ll_{j,\eps} y (\log x)^{j-1}. 
\]
\end{lem}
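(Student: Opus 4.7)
The plan is to proceed by induction on $j$. The base case $j=1$ is the Brun--Titchmarsh theorem for intervals of length at least $x^{\eps}$, which gives $\sum_{x<n\le x+y}\Lambda(n)\ll_{\eps} y$ uniformly in $x$. For the inductive step, I would read off from identity \eqref{N3.1} the coefficient relation
\[
\Lambda_{j+1}(n)=\Lambda_j(n)\log n+(\Lambda*\Lambda_j)(n).
\]
The first contribution is handled immediately: $\sum_{x<n\le x+y}\Lambda_j(n)\log n\le\log(2x)\cdot\sum_{x<n\le x+y}\Lambda_j(n)\ll y(\log x)^j$ by the inductive hypothesis. Along the way, the same induction applied to full intervals $[1,X]$ yields the long-interval bound $\sum_{n\le X}\Lambda_j(n)\ll_j X(\log X)^{j-1}$, which I will use via Abel summation in the form $\sum_{b\le B}\Lambda_j(b)/b\ll_j(\log B)^j$.

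For the convolution $\sum_{ab\in(x,x+y]}\Lambda(a)\Lambda_j(b)$, I would split at the threshold $A=x^{\eps/2}$. When $a\le A$, the interval $(x/a,(x+y)/a]$ has length $y/a\ge x^{\eps/2}\ge(x/a)^{\eps/2}$, so the inductive hypothesis applied with parameter $\eps/2$ bounds the inner sum by $O((y/a)(\log x)^{j-1})$; multiplying by $\Lambda(a)/a$ and summing via Mertens ($\sum_{a\le A}\Lambda(a)/a\ll\log x$) contributes $\ll y(\log x)^j$, as desired. When $a>A$, the condition $ab\in(x,x+y]$ forces $b\le 2x^{1-\eps/2}$, and I would switch the order of summation. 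For each such $b$, the inner sum becomes $\sum_{a\in I_b}\Lambda(a)$ where $I_b=(\max(x^{\eps/2},x/b),(x+y)/b]$; Brun--Titchmarsh controls this inner sum by $O(y/b)$ provided $|I_b|$ exceeds a small power of $x/b$, and combining with the Abel-summed estimate for $\sum_b\Lambda_j(b)/b$ recovers the desired contribution of $O(y(\log x)^j)$.

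The main technical obstacle is the transitional regime $b\approx x^{1-\eps/2}$, where $|I_b|\le 2$ and Brun--Titchmarsh degenerates to the trivial bound of size $\log x$; a naive analysis here would introduce a spurious $\log\log x$ factor. To eliminate this loss I would dyadically decompose the range of $b$, exploiting two features: the hyperbola-arc structure of $\{(a,b):ab\in(x,x+y]\}$, which has total arc length only of order $y/b$ in each slice, and the support constraint $\omega(b)\le j$ on $\Lambda_j(b)$, which thins out $b$ enough that the marginal Brun--Titchmarsh factors from the dyadic ranges close to $y$ sum to a bounded constant rather than to $\log\log x$. Closing the induction then yields the stated bound $\ll_{j,\eps}y(\log x)^{j-1}$ for $\Lambda_{j+1}$.
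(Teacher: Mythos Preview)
Your approach via the identity $\Lambda_{j+1}=\Lambda_j\log+\Lambda*\Lambda_j$ is natural but, as you yourself flag, it runs into a genuine difficulty in the range where neither $a$ nor $b$ is small. The sketched remedy does not close the gap: the ``support constraint $\omega(b)\le j$'' is already encoded in the long-interval bound $\sum_{b\sim B}\Lambda_j(b)/b\ll(\log B)^{j-1}$ and yields no additional saving, and a dyadic computation shows that the Brun--Titchmarsh weights $\log(x/b)/\log(y/b)$ summed over the ranges $b\sim y/2^m$ really do produce a factor $\sum_{1\le m\ll\log(x/y)}\log(x/y)/m\asymp\log(x/y)\log\log x$. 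Moreover the problematic zone is not merely a narrow band near $b\approx x^{1-\eps/2}$ but the entire region where $y/b$ is small compared with any fixed power of $x/b$; in the convolution $\Lambda*\Lambda_j$ the prime power $a$ playing the role of the ``small'' variable can be any prime-power divisor of $n$, so there is no structural reason for it to lie below $\sqrt{y}$.

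The paper avoids this by using a different recursion. It writes $n=p^a m$ with $p=P^{-}(n)$ the \emph{smallest} prime factor, and expands $\Lambda_j(n)$ as a linear combination of $\Lambda_{j-\ell}(m)$ for $1\le\ell\le j$. The split is then on $P^{-}(n)$ relative to $\sqrt{y}$. When $P^{-}(n)>\sqrt{y}$, a direct sieve bound gives only $\ll y/\log y\ll_{\eps} y/\log x$ such $n$ in $(x,x+y]$, and the trivial pointwise estimate $\Lambda_j(n)\ll(\log x)^j$ suffices. When $p=P^{-}(n)\le\sqrt{y}$, the cofactor $m$ lies in the interval $(x/p,(x+y)/p]$ of length $y/p\ge\sqrt{y}\ge(x/p)^{\eps/2}$, which is \emph{always} long enough for the inductive hypothesis. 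Thus the least-prime-factor decomposition guarantees that the inner variable is small, and the transitional regime simply does not arise.
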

\begin{proof} 	We argue by induction on $j$. The base case $j=1$ is a direct corollary of the classical Brun-Titchmarsh inequality (for example, see \cite[Theorem 3.9]{MV}).  Now suppose that $j\ge 2$ and that the lemma holds for $\Lambda_1,\dots, \Lambda_{j-1}$.  

The number of integers in $(x,x+y]$ all of whose prime factors are $\ge \sqrt{y}$ may be bounded by 
$\ll y/\log y \ll_{\eps} y/\log x$ (see \cite[Theorem 3.3]{MV}).  Therefore, with $P^{-}(n)$ denoting the least prime 
factor of the integer $n$, we have that
 \begin{equation} 
\label{N3.2} 
\sum_{\substack{ x< n\le x+y \\ P^{-}(n) >\sqrt{y}}} \Lambda_j(n) 
	\ll_j (\log x)^{j} \sum_{\substack{ x< n\le x+y \\ P^{-}(n) >\sqrt{y}}} 1
	\ll_{j,\eps} y (\log x)^{j-1}. 
\end{equation} 
To establish the lemma, it remains to show that 
\begin{equation} 
\label{N3.3} 
\sum_{\substack{ x< n\le x+y \\ P^{-}(n) \le \sqrt{y}}} \Lambda_j(n) \ll_{j,\eps} y (\log x)^{j-1}. 
\end{equation} 

Let $p$ be a prime and suppose $n= p^a m$ with $a\ge 1$ and $p \nmid m$.  Note that 
\begin{align*}
\Lambda_j(n) 
	&= \sum_{d|n} \mu(d) \log^j(n/d)
		 = \sum_{d|m} \mu(d) \Big( \log^j(p^a m/d) - \log^j(p^{a-1}m/d)\Big)\\
	&= \sum_{\ell=1}^{j} \binom{j}{\ell}  \big( \log^\ell(p^a) - \log^\ell(p^{a-1}) \big) 
		\sum_{d|m} \mu(d) \log^{j-\ell}(m/d) \\
	&=\sum_{\ell=1}^{j} \binom{j}{\ell} \big( \log^\ell(p^a) - \log^\ell(p^{a-1}) \big)
		  \Lambda_{j-\ell}(m). 
\end{align*}
If $a=1$, then we deduce that 
\begin{equation}\label{lambda-recurrence}
\Lambda_j(n) \ll_j (\log p)^j  1_{m=1} +(\log p)^{j-1} \Lambda_1(m) +\ldots +(\log p)\Lambda_{j-1}(m). 
\end{equation}
On the other hand, if $a>1$, then we use the bound $\Lambda_{j-\ell}(m)\ll_j (\log m)^{j-\ell}$ to conclude that
\begin{equation}\label{lambda-recurrence2}
\Lambda_j(n) \ll_j \log(p^a) (\log m)^{j-1}+\log^j(p^a) 
\ll \log(p^a)(\log n)^{j-1} .
\end{equation}

We now return to the task of estimating \eqref{N3.3}, using the above two estimates. 
Let $p$ denote the smallest prime factor of $n$, so that $p\le \sqrt{y}$. 
The terms with $p\Vert n$ contribute, using the induction hypothesis and \eqref{lambda-recurrence}, 
\begin{align} 
\label{N3.4}
&\ll_j \sum_{p\le \sqrt{y}} \,
	\sum_{x/p < m \le (x+y)/p}\big((\log p)^{j-1} \Lambda_1(m) +\ldots +(\log p)\Lambda_{j-1}(m)\big)
\nonumber \\
&\ll_j \sum_{p\le \sqrt{y}} 	
		\Big( (\log p)^{j-1}  \cdot  \frac{y}{p} +
		+\ldots+
		 (\log p)  \cdot  \frac{y}{p} (\log x)^{j-2} \Big) \nonumber\\
&\ll y (\log x)^{j-1}.
\end{align} 
Lastly, using \eqref{lambda-recurrence2}, we find that the terms with $p^2|n$ contribute 
\begin{align} 
\label{N3.5} 
&\ll (\log x)^{j-1}  \mathop{\sum\sum}_{\substack{p \le \sqrt{y} ,\, a \ge 2 \\ p^a \le x+y} } 
\log(p^a) \sum_{x/p^a \le m \le (x+y)/p^a} 1 \nonumber\\
& \ll 
(\log x)^{j-1}  \mathop{\sum\sum}_{\substack{p \le \sqrt{y} ,\, a \ge 2 \\ p^a \le x+y} } 
	 \log(p^a) \Big( \frac{y}{p^a} + 1 \Big)\ll y(\log x)^{j-1}.
\end{align} 
Combining \eqref{N3.4} and \eqref{N3.5} yields \eqref{N3.3}, completing the proof of the lemma. 
\end{proof}



\section{Proof of Proposition \ref{Prop1}} 

Recall that part (a) of Proposition \ref{Prop1} was already established in Lemma \ref{lem:L-der-bound}.  We now turn to the 
remaining three parts of the proposition, with the next lemma settling part (b).

\begin{lem} \label{lem:logL} Let $f\in {\mathcal F}(D;A)$ with $A> D+1$. For any real number $\gamma$, there exists 
an integer $j\in[0,D]$ with $L^{(j)}(1+i\gamma, f)\neq 0$. The \emph{multiplicity} of the zero of $L(s,f)$ at $s=1+i\gamma$ is defined as the smallest such $j$ with $L^{(j)}(1+i\gamma, f)\neq 0$.  If $m$ is the multiplicity 
of $1+i\gamma$ (we allow the possibility that $m =0$, which occurs when $L(1+i\gamma, f)\neq 0$), then 
\[
\Big| \sum_{p\le x} \frac{m+\Re (f(p)p^{-i\gamma})}{p}\Big| \le C
\] 
for some constant $C=C(f,\gamma)$.  
\end{lem}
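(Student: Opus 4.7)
The plan proceeds in two stages. First, I show that the vanishing order $m$ of $L(s,f)$ at $s=1+i\gamma$ is finite and at most $D$. Then I extract the partial sum estimate by analysing the size of the auxiliary Dirichlet series $F(s):=L(s,f)\zeta(s-i\gamma)^m$ along the ray $s=\sigma+i\gamma$, $\sigma\downarrow 1$. For the first stage I use a two-sided squeeze. The lower bound comes from \eqref{2.2}: for $\sigma>1$,
\[
\log|L(\sigma+i\gamma,f)|=\Re\sum_p\frac{f(p)}{p^{\sigma+i\gamma}}+O(1)\ge -D\sum_p\frac{1}{p^\sigma}+O(1)=D\log(\sigma-1)+O(1),
\]
using $|f(p)|\le D$ (a consequence of $|\Lambda_f|\le D\Lambda$) and the classical $\sum_p p^{-\sigma}=-\log(\sigma-1)+O(1)$; hence $|L(\sigma+i\gamma,f)|\ge c(\sigma-1)^D$ for some $c=c(f,\gamma)>0$ as $\sigma\downarrow 1$. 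For the upper bound, suppose $L^{(j)}(1+i\gamma,f)=0$ for every $j\in\{0,\dots,D\}$. Since $D<A-1$, Lemma \ref{lem:L-der-bound} tells us that each $L^{(j)}$ in this range extends continuously to the closed half-plane $\Re s\ge 1$. By the fundamental theorem of calculus and our vanishing hypothesis, $L^{(j)}(\sigma+i\gamma,f)=\int_1^\sigma L^{(j+1)}(t+i\gamma,f)\,dt$ for $j=D-1,D-2,\dots,0$. Starting from $L^{(D)}(t+i\gamma,f)=o(1)$ as $t\downarrow 1$ and iterating, one arrives at $L(\sigma+i\gamma,f)=o((\sigma-1)^D)$, contradicting the lower bound. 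Hence $m:=\min\{j\ge 0:L^{(j)}(1+i\gamma,f)\ne 0\}$ exists and $m\le D$.

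The same iteration now gives $L(\sigma+i\gamma,f)\sim L^{(m)}(1+i\gamma,f)(\sigma-1)^m/m!$ as $\sigma\downarrow 1$, whence $F(\sigma+i\gamma)\to L^{(m)}(1+i\gamma,f)/m!\ne 0$. Since additionally $F(s)$ is analytic and nonzero for $\Re s>1$ (via the representation $L(s,f)=\exp(\sum\Lambda_f(n)/(n^s\log n))$ and non-vanishing of $\zeta$), the function $\log|F(\sigma+i\gamma)|$ is bounded uniformly for all $\sigma>1$. Combining \eqref{2.2} with the classical identity $\log\zeta(\sigma)=\sum_p p^{-\sigma}+O(1)$ then yields, for $\sigma>1$,
\[
\log F(\sigma+i\gamma)=\sum_p\frac{m+f(p)p^{-i\gamma}}{p^\sigma}+O(1).
\]
Taking real parts produces the Abel-summability bound
\[
\sum_p\frac{m+\Re(f(p)p^{-i\gamma})}{p^\sigma}=O_{f,\gamma}(1)\qquad\text{uniformly for }\sigma>1.
\]

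Finally, I descend from this Abel bound to the partial sum. Set $a_p:=m+\Re(f(p)p^{-i\gamma})$, so that $|a_p|\le m+D$, and specialise to $\sigma:=1+1/\log x$. A short Mertens-style calculation estimates the truncation error:
\[
\Big|\sum_{p\le x}\frac{a_p}{p}-\sum_p\frac{a_p}{p^\sigma}\Big|\le(m+D)\Big(\sum_{p\le x}\Big|\frac1p-\frac1{p^\sigma}\Big|+\sum_{p>x}\frac1{p^\sigma}\Big).
\]
The first inner sum is $O(1)$ via $|p^{-1}-p^{-\sigma}|\le(\sigma-1)(\log p)/p$ and Mertens' estimate $\sum_{p\le x}(\log p)/p=\log x+O(1)$, while the second is $O(1)$ via PNT followed by the substitution $u=\log t/\log x$ in the resulting integral. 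Combining with the Abel bound completes the proof. The main obstacle I see is the iterated integration step in the first paragraph: the argument is sharp at the hypothesis $A>D+1$ and relies crucially on $L^{(D)}$ (but not $L^{(D+1)}$) extending continuously to the line $\Re s=1$, which is precisely what Lemma \ref{lem:L-der-bound} provides in this range.
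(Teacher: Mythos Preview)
Your proof is correct and follows essentially the same route as the paper's: both combine the lower bound $|L(\sigma+i\gamma,f)|\gg(\sigma-1)^D$ from \eqref{2.2} with a Taylor expansion (your iterated integration is exactly the integral-remainder form the paper invokes in its footnote) to pin down the multiplicity $m\le D$, and then take logarithms at $\sigma=1+1/\log x$ to extract the partial-sum bound. The only cosmetic difference is that you package the endgame via the auxiliary $F(s)=L(s,f)\zeta(s-i\gamma)^m$ and spell out the Mertens-type truncation explicitly, whereas the paper computes $\log|L(1+1/\log x+i\gamma,f)|+m\log\log x$ directly and absorbs the truncation into an $O(1)$.
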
 

\begin{proof}  As $\sigma \to 1^+$, Taylor's theorem\footnote{The Dirichlet series $L(s,f)$ can be extended to function that is continuously differentiable function $D$ times in the half-plane $\mathbb{H}:=\{s\in\C:\Re(s)\ge1\}$, because the series $\sum_{n\ge1}f(n)(\log n)^j/n^s$ converges uniformly in $\mathbb{H}$ by our assumption that $f\in\CF(F;A)$ with $A>D+1$ and by partial summation. Note that the derivatives on the line $\Re(s)=1$ are defined as one-sided limits. Taylor's theorem with integral remainder term then implies that $L(\sigma+i\gamma,f)=\sum_{j=0}^{D-1} (\sigma-1)^j L^{(j)}(1+i\gamma,f)/j!+\int_1^\sigma (\sigma-u)^{D-1} L^{(D)}(u+i\gamma,f) /D! \dee u$. The last term of this expansion equals $(\sigma-1)^DL^{(D)}(1+i\gamma,f)+o((\sigma-1)^D)$ when $\sigma\to1^+$.}
shows that 
\[ 
L(\sigma+ i\gamma, f) = \sum_{j=0}^{D} \frac{(\sigma-1)^j}{j!} L^{(j)}(1+i\gamma, f) + o((\sigma -1)^D). 
\] 
But since $\Re(f(p)p^{-i\gamma}) \ge -D$ for all $p$, relation \eqref{2.2} implies that
\[
|L(\sigma+i\gamma,f)| \gg \exp\Big(\sum_{p}\frac{ \Re(f(p)p^{-i\gamma})}{p^{\sigma}}\Big) \gg (\sigma -1)^D. 
\] 
Therefore $L^{(j)}(1+i\gamma, f) \neq 0$ for some $0\le j\le D$, and the notion of multiplicity 
is well defined.  

If $m\le D$ denotes the multiplicity, then a new application of Taylor's theorem gives 
\[ 
L(\sigma +i\gamma, f) = \frac{(\sigma -1)^m}{m!} L^{(m)}(1+i\gamma, f) + o((\sigma -1)^m).
\] 
Writing $\sigma =1+1/\log x$ and taking logarithms, we find that
\[ 
\Big|\sum_{p \le x} \frac{m+\Re(f(p)p^{-i\gamma})}{p} \Big| =\Big|  \log |L(1+1/\log x+i\gamma, f)| + m \log \log x + O(1) \Big| 
\le C(f,\gamma),  
\] 
as desired.  
\end{proof} 

We now turn to the task of proving part (c) of Proposition \ref{Prop1}.  
Suppose $1+i\gamma_1,\dots,1+i\gamma_k$ are distinct zeros of $L(s,f)$, and let $m_j$ denote the multiplicity of the zero $1+i\gamma_j$.  
We wish to show that $m _1+ \cdots + m_k \le D$, so that part (c) would follow.   A key role will be played by the auxiliary function
\[
A_N(x) = \lim_{T\to \infty} \frac{1}{2T} \int_{-T}^{T} | 1 + e^{it\gamma_1} + \ldots +e^{i t \gamma_k}|^{2N} e^{it x} \dee t,
\] 
where $N$ is an integer that will be chosen large enough. By expanding the $(2N)$-th power, it is easy to see that $A_N(x)$ is non-zero only for those real $x$ that may be written as $j_1 \gamma_1 + \cdots + j_k \gamma_k$ with $|j_1| + \cdots + |j_k| \le N$.  Note that there may be linear relations among the $\gamma_j$, so that $A_N(x)$ could have a complicated structure. The following lemma summarizes the key properties of $A_N(x)$ for our purposes.

\begin{lem}\label{lem:A_N(x)} Let $N$ be a natural number.
\begin{enumerate}
	\item We have $A_N(0) \gg_k (k+1)^{2N}N^{-k/2}$.
	\item Let $\eps>0$ and $j\in\{1,\dots,k\}$. If $N$ is large enough in terms of $\eps$ and $k$, then
	$A_N(\gamma_j) \ge (1-\eps) A_N(0)$.
\end{enumerate}
\end{lem}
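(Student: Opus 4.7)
The plan is to expand $|1 + \sum_{\ell=1}^k e^{it\gamma_\ell}|^{2N}$ multinomially, obtain a combinatorial identity for $A_N$, and then analyse the sums by Laplace's method together with Gaussian concentration of the multinomial distribution. Setting $\gamma_0 = 0$, the factorisation
\[ |1+\sum_{\ell=1}^k e^{it\gamma_\ell}|^{2N} = \Big(\sum_{\ell=0}^k e^{it\gamma_\ell}\Big)^N\Big(\sum_{\ell=0}^k e^{-it\gamma_\ell}\Big)^N, \]
together with $T\to\infty$ averaging of $(\,\cdot\,) e^{itx}$, picks out the terms whose net frequency equals $-x$ and produces
\[ A_N(x) = \sum_{\substack{\mathbf{j}, \mathbf{j}' \in \Z_{\ge 0}^{k+1} \\ |\mathbf{j}| = |\mathbf{j}'| = N \\ \sum_\ell (j_\ell' - j_\ell)\gamma_\ell \,=\, x}} \binom{N}{\mathbf{j}}\binom{N}{\mathbf{j}'}. \]
All subsequent arguments will estimate this sum.

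For (a), I would restrict to the diagonal $\mathbf{j} = \mathbf{j}'$, which is always admissible for $x = 0$, giving $A_N(0) \ge \sum_{\mathbf{j}}\binom{N}{\mathbf{j}}^2$. By Parseval applied to $(1 + e^{i\theta_1} + \cdots + e^{i\theta_k})^N$, this diagonal sum equals $(2\pi)^{-k}\int_{[0,2\pi]^k} |1 + e^{i\theta_1} + \cdots + e^{i\theta_k}|^{2N}\dee\theta$. The integrand attains its unique maximum $(k+1)^{2N}$ at $\theta = 0$, and a Taylor expansion gives
\[ |1 + e^{i\theta_1} + \cdots + e^{i\theta_k}|^2 = (k+1)^2 - (k+1)\sum_\ell \theta_\ell^2 + \Big(\sum_\ell\theta_\ell\Big)^2 + O(|\theta|^4), \]
whose leading quadratic form is positive definite (its eigenvalues are $1/(k+1)$ with multiplicity $k-1$ and $1/(k+1)^2$ simple). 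Laplace's method then yields the lower bound $\gg_k (k+1)^{2N}/N^{k/2}$.

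For (b), the key observation is that the shift $(\mathbf{j}, \mathbf{j}') \mapsto (\mathbf{j}, \mathbf{j}' + e_j - e_0)$ (defined whenever $j_0' \ge 1$) injects the pairs indexing $A_N(0)$ into those indexing $A_N(\gamma_j)$, and $\binom{N}{\mathbf{j}' + e_j - e_0} = \binom{N}{\mathbf{j}'} \cdot j_0'/(j_j' + 1)$; this gives
\[ A_N(\gamma_j) \ge \sum_{\substack{\sum_\ell(j_\ell' - j_\ell)\gamma_\ell \,=\, 0 \\ j_0' \,\ge\, 1}} \binom{N}{\mathbf{j}}\binom{N}{\mathbf{j}'} \cdot \frac{j_0'}{j_j'+1}. \]
The remaining task is to compare this with $A_N(0) = \sum_{\sum_\ell(j_\ell' - j_\ell)\gamma_\ell = 0} \binom{N}{\mathbf{j}}\binom{N}{\mathbf{j}'}$. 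To do this I would invoke Stirling (or the local central limit theorem for the uniform multinomial): $\binom{N}{\mathbf{j}}/(k+1)^N$ is Gaussian-concentrated around $\mathbf{j}_\ast = (N/(k+1),\ldots,N/(k+1))$ with scale $\sqrt{N}$, so the pairs with some coordinate outside the window $|j_\ell - N/(k+1)|,\,|j_\ell' - N/(k+1)| \le N^{2/3}$ contribute at most $(k+1)^{2N} e^{-cN^{1/3}}$ to $A_N(0)$, which is negligible compared with the part-(a) bound $A_N(0) \gg_k (k+1)^{2N}/N^{k/2}$. Inside the main window one has $j_0' \ge 1$ and $j_0'/(j_j' + 1) = 1 + O(N^{-1/3})$, so $A_N(\gamma_j) \ge (1 + O(N^{-1/3}))A_N(0) \ge (1 - \eps)A_N(0)$ once $N$ is large enough in terms of $\eps$ and $k$. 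The main obstacle is making this concentration step uniform in the $\gamma_\ell$'s, since $\Q$-linear relations among them can create extra off-diagonal pairs that inflate $A_N(0)$ past the diagonal sum; however, applying Gaussian tail bounds factor-by-factor to $\binom{N}{\mathbf{j}}\binom{N}{\mathbf{j}'}$ still controls the off-window contribution regardless of the arithmetic of the $\gamma_\ell$.
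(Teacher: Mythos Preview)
Your argument is correct. For part (a) you follow essentially the paper's route: multinomial expansion, restriction to the diagonal $\mathbf{j}=\mathbf{j}'$, and then a local analysis near the centre (you phrase it via Parseval and Laplace, the paper via Stirling directly, but these are equivalent).

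For part (b), however, your approach is genuinely different from the paper's. The paper never returns to the combinatorial formula; instead it works directly with the defining integral. It observes that $A_N(\gamma_j)=\lim_{T\to\infty}\frac{1}{2T}\int_{-T}^T \cos(t\gamma_j)\,|1+e^{it\gamma_1}+\cdots+e^{it\gamma_k}|^{2N}\,\dee t$, and that on the set where $\cos(t\gamma_j)\le 1-4\delta$ one has $|1+\sum_\ell e^{it\gamma_\ell}|\le k+1-2\delta$, so that this set contributes at most $(k+1-2\delta)^{2N}\le \delta A_N(0)$ by part (a); on the complement $\cos(t\gamma_j)>1-4\delta$, and one reads off $A_N(\gamma_j)\ge (1-4\delta)(1-\delta)A_N(0)-\delta A_N(0)$. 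This is shorter and more robust, since it needs nothing beyond the trivial pointwise bound $|1+\sum_\ell e^{it\gamma_\ell}|\le k+1$ and part (a). Your combinatorial route---the shift $\mathbf{j}'\mapsto \mathbf{j}'+e_j-e_0$ together with multinomial concentration to force $j_0'/(j_j'+1)=1+O(N^{-1/3})$ on the dominant terms---also works, and has the merit of giving an explicit quantitative rate $A_N(\gamma_j)\ge (1-O_k(N^{-1/3}))A_N(0)$, but it is noticeably heavier to execute. Your final remark correctly identifies (and dispatches) the only subtle point in your version: the off-window bound is obtained by dropping the linear constraint on $(\mathbf{j},\mathbf{j}')$ entirely, so possible $\Q$-relations among the $\gamma_\ell$ are irrelevant.
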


\begin{proof}
(a) Let $\gamma_0=0$. Then, the multinomial theorem implies that
\[
A_N(0) = \mathop{\sum\cdots\sum}_{\substack{j_0+j_1+\cdots+j_k=j_0'+j_1'+\cdots+j_k'=N \\ j_0\gamma_0+j_1\gamma_1+\cdots+j_k\gamma_k=j_0'\gamma+j_1'\gamma_1+\cdots+j_k'\gamma_k \\ j_i,j_i'\ge0\ \forall i}} \binom{N}{j_0,j_1,\dots,j_k}\binom{N}{j_0',j_1',\dots,j_k'} .
\] 
By restricting our attention on the `diagonal' terms with $j_i'=j_i$ for all $i$, we infer that
\[
A_N(0) \ge \mathop{\sum\cdots\sum}_{\substack{j_0+j_1+\cdots+j_k=N \\ j_i\ge0\ \forall i}}  \binom{N}{j_0,j_1,\dots,j_k}^2 .
\]
When $|j_i-N/k|\le \sqrt{N}$ for all $i$, Stirling's formula implies that the corresponding binomial coefficient has size $\asymp_k (k+1)^N N^{-k}$. Since there are $\asymp_k N^{k/2}$ tuples $(j_0,j_1,\dots,j_k)\in\Z_{\ge0}^{k+1}$ with $j_0+j_1+\cdots+j_k=N$, the claimed lower bound on $A_N(0)$ follows readily. 

\medskip

(b) Let $\CT$ denote the set of $t\in[-T,T]$ such that $\cos (t \gamma_j) \le 1-4\delta$. 
	Then $|1+e^{it\gamma_j}|\le2\sqrt{1-2\delta}\le2-2\delta$, whence $|1+e^{it \gamma_1} + \cdots +e^{it\gamma_k}| \le k+1- 2\delta$.  Therefore, in view of part (a), we have that
	\[
	\frac{1}{2T}\int_{\CT} 
	|1+e^{it\gamma_1}  + \cdots + e^{it\gamma_k}|^{2N} \dee t	
	\le (k+1-2\delta)^{2N} \le \delta A_N(0),
	\]
	provided that $N$ is large enough. Hence, if $T$ is sufficiently large,  
	\als{
		\frac{1}{2T}\int_{-T}^{T} \cos(t\gamma_j)
		|1+e^{it\gamma_1}  + \cdots + e^{it\gamma_k}|^{2N} \dee t
		& \ge 
		\frac{1-4\delta}{2T} \int_{[-T,T]\setminus \CT} |1+e^{it\gamma_1}  + \cdots + e^{it\gamma_k}|^{2N} \dee t \\
		&\qquad	
		- \frac{1}{2T}\int_{\CT} |1+e^{it\gamma_1}  + \cdots + e^{it\gamma_k}|^{2N} \dee t	 \\
		&\ge (1-4\delta)(1-\delta)A_N(0) - \delta A_N(0).
	}
 Taking $\delta$ suitably small in terms of $\eps$ completes the proof of the lemma.
\end{proof}

\begin{proof}[Proof of Proposition \ref{Prop1}(c)]
Let $N$ be  a large integer   to be chosen later, and consider the behavior of 
\begin{equation} \label{eqn:lambda-dfn}
\lambda_N(x):= \frac{1}{\log \log x} \Re \Big( \sum_{p\le x} \frac{f(p)}{p} |1+p^{i\gamma_1} + \cdots + p^{i\gamma_k}|^{2N}\Big),  
\end{equation} 
as $x\to \infty$.  We will estimate this quantity in two distinct ways, one of which will produce a lower bound and another one which will produce an upper bound. Comparing these bounds will then show that $m_1+\cdots+m_k\le D$.

For the lower bound on $\lambda_N(x)$, we note that our assumption that $|f(p)| \le D$ for all primes $p$ implies that
 \begin{equation} 
 \label{eqn:lambda-lb} 
\lambda_N(x) 
	\ge  \frac{ -D}{\log \log x}  \sum_{p\le x} \frac{1}{p} |1+p^{i\gamma_1} + \cdots + p^{i\gamma_k}|^{2N} 
	= - D A_N(0) +o(1),
\end{equation}
with the second relation following from the Prime Number Theorem.

Let us now bound $\lambda_N(x)$ from above. Expanding $|1+p^{-i\gamma_1} + \cdots +p^{-i\gamma_k}|^{2N}$, we find that
\[
\lambda_N(x) = \frac{1}{\log \log x}\sum_{\substack{0\le j_1,\dots,j_{2N}\le k \\ \gamma=\sum_{n\le N}\gamma_{j_n}-\sum_{n>N}\gamma_{j_n}}} 
	\sum_{p\le x} \frac{\Re(f(p)p^{-i\gamma})}{p} 
\]
with the convention that $\gamma_0=0$.  If now $\gamma=\gamma_\ell$ for some $\ell$, then the sum over $p$ equals $-m_\ell\log\log x+O(1)$.  
The number of choices of $j_1$, $\ldots$, $j_{2N}$ that lead to $\gamma= \gamma_\ell$ is exactly $A_N(\gamma_\ell)$.   
If $\gamma$ is not $\gamma_\ell$ for some $1\le \ell \le k$, then by Lemma \ref{lem:logL}  
we see that the sum over $p$ is bounded above by a constant.   Indeed if $\gamma$ is not an ordinate of a zero of $L(s,f)$ on the $1$--line, then 
the sum over $p$ is simply $O(1)$; {\sl a priori}, there could be other zeros of $L(s,f)$ besides $1+i\gamma_1$, $\ldots$, $1 +i\gamma_k$ and $\gamma$ could 
be one of these zeros, but 
nevertheless the sum over $p$ is bounded above by $O(1)$.    In conclusion,
\[
\lambda_N(x) \le - \sum_{\ell=1}^k m_\ell \sum_{\substack{0\le j_1,\dots,j_{2N}\le k \\ \sum_{n\le N}\gamma_{j_n}-\sum_{n>N}\gamma_{j_n} = \gamma_\ell}}1+o(1)
	= - \sum_{\ell=1}^k m_\ell A_N(\gamma_\ell)  +o(1).
\] 

Comparing the above inequality with \eqref{eqn:lambda-lb}, we infer that
\begin{equation} 
\label{eqn:multiplicities-ineq} 
\sum_{\ell=1}^{k} m_\ell A_N(\gamma_\ell) \le D A_N(0). 
\end{equation} 
To complete the proof, we apply Lemma \ref{lem:A_N(x)}(b) with $\eps=1/(m_1+\cdots+m_k+1)$ to find that the left-hand side of \eqref{eqn:multiplicities-ineq} is $>A_N(0)(m_1+\cdots+m_k-1)$, as long as $N$ is large enough. Since $A_N(0)>0$, we conclude that $m_1+\cdots+m_k<D+1$, as desired. \end{proof}

It remains lastly to prove part (d) of Proposition \ref{Prop1}. 
Suppose that the multiset $\widetilde \Gamma$ consists of $k$ distinct values, and has been arranged so that $\gamma_1,\dots, \gamma_k$ are 
these distinct values, and each such $\gamma_j$ occurs in $\widetilde \Gamma$ with multiplicity $\tilde{m}_j$.  
As in Lemma \ref{lem:DivisorSums}, put $\tau_{\widetilde \Gamma}(n) = \sum_{d_1 \cdots d_m = n} d_1^{i\gamma_1} \cdots d_m^{i\gamma_m}$ and define $f_{\widetilde \Gamma}$ to be the Dirichlet 
convolution $f * \tau_{\widetilde \Gamma}$.

\begin{lem}\label{lem:f-tau}  
With the above notations, we have 
\[
\sum_{n\le x} f_{\widetilde \Gamma}(n) \ll  C(f) \frac{x}{(\log x)^{A- \widetilde{m}}} + \frac{x (\log \log x)^{2D}}{(\log x)^A},  
\] 
for some constant $C(f)$, and with ${\widetilde m}$ denoting the maximum of the multiplicities $\tilde{m}_1,\dots,\tilde{m}_k$.
\end{lem}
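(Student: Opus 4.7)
I would prove this bound via an asymmetric Dirichlet hyperbola decomposition. Set $U = x/(\log x)^B$ and $V = (\log x)^B$ with a large constant $B = B(f, \widetilde\Gamma, A, D)$ to be chosen, and write $F(y) = \sum_{a \le y} f(a)$, $T(y) = \sum_{b \le y} \tau_{\widetilde\Gamma}(b)$. Then
\[
\sum_{n \le x} f_{\widetilde\Gamma}(n) = \underbrace{\sum_{a \le U} f(a)\,T(x/a)}_{S_1} + \underbrace{\sum_{b \le V} \tau_{\widetilde\Gamma}(b)\,F(x/b)}_{S_2} - \underbrace{F(U)\,T(V)}_{S_3}.
\]
For $b \le V$ one has $\log(x/b) \asymp \log x$, so using $|F(y)| \le Ky/(\log y)^A$, the trivial bound $|\tau_{\widetilde\Gamma}(b)| \le \tau_m(b)$ with $m = |\widetilde\Gamma| \le D$, and $\sum_{b \le V} \tau_m(b)/b \ll (\log V)^m$, the off-diagonal pieces satisfy $|S_2| + |S_3| \ll Kx (\log\log x)^{2D}/(\log x)^A$, matching the second term in the stated bound.

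The heart of the matter is $S_1$. Applying Lemma \ref{lem:DivisorSums} to each $T(x/a)$, the resulting $O((x/a)^{1-\delta})$ error contributes $\ll x^{1-\delta} \sum_{a \le U} \tau_D(a)/a^{1-\delta} \ll x(\log x)^{D-1}/(\log x)^{B\delta}$, which is absorbed into the target once $B$ is chosen with $B\delta \ge A + D - 1$. For the main term I would expand
\[
P_{j,\widetilde\Gamma}(\log x - \log a) = \sum_{r=0}^{\widetilde m_j - 1} Q_{j,r}(\log x)\,(\log a)^r,
\]
with $\deg Q_{j,r} \le \widetilde m_j - 1 - r$, reducing the problem to estimating the inner sums $\sum_{a \le U} f(a)(\log a)^r/a^{1+i\gamma_j}$ for $0 \le r < \widetilde m_j$.

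This is where the vanishing of the derivatives of $L(s,f)$ at its zeros is exploited. Since $1 + i\gamma_j$ is a zero of $L(s,f)$ of multiplicity $m_j \ge \widetilde m_j > r$, Lemma \ref{lem:logL} gives $L^{(r)}(1 + i\gamma_j, f) = 0$; letting $M \to \infty$ in Lemma \ref{lem:L-der-bound} applied with $s = 1 + i\gamma_j$ and exponent $r$ then yields
\[
\Bigl| \sum_{a \le U} \frac{f(a)(\log a)^r}{a^{1+i\gamma_j}} \Bigr| = \Bigl| \sum_{a > U} \frac{f(a)(\log a)^r}{a^{1+i\gamma_j}} \Bigr| \ll \frac{K(1+|\gamma_j|)}{(\log U)^{A-r-1}}.
\]
Multiplying by $|Q_{j,r}(\log x)| \ll (\log x)^{\widetilde m_j - 1 - r}$ and by the $x^{1+i\gamma_j}$ factor, the contribution of each $(j,r)$ pair to $|S_1|$ is $\ll K x (1 + |\gamma_j|) (\log x)^{\widetilde m_j - A}$; summing over the finitely many such pairs yields $|S_1^{\rm main}| \le C(f)\, x/(\log x)^{A - \widetilde m}$. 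I expect no serious obstacle: the two subtle points are that the hyperbola split must be asymmetric (shifting mass toward the $f$-side, since the trivial bound $\sum_b \tau_m(b)/b \ll (\log x)^m$ would otherwise lose a power of $\log x$ rather than $\log\log x$), and that one must recognise the precise cancellation $(\log x)^{\widetilde m_j - 1 - r} \cdot (\log x)^{-(A-r-1)} = (\log x)^{\widetilde m_j - A}$, which makes the dependence on $r$ drop out and leaves precisely the target exponent $A - \widetilde m$.
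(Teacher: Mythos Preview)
Your proof is correct and follows essentially the same approach as the paper: an asymmetric hyperbola decomposition, application of Lemma~\ref{lem:DivisorSums} to the $\tau_{\widetilde\Gamma}$-sum, and exploitation of the vanishing derivatives $L^{(r)}(1+i\gamma_j,f)=0$ via the tail bound of Lemma~\ref{lem:L-der-bound}. The only difference is cosmetic: the paper takes the cut $z=\exp((\log\log x)^2)$ (which is what produces the exponent $2D$ on $\log\log x$ in the statement), whereas you take $V=(\log x)^B$; your choice actually gives a smaller power $(\log\log x)^D$ in $S_2+S_3$, but at the cost of an implicit constant depending on $B=B(\delta(\widetilde\Gamma))$ and hence on $f$, which you then absorb into the $C(f)\,x/(\log x)^{A-\widetilde m}$ term.
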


\begin{proof}   As in the hyperbola method we may write, for some parameter $2\le z\le \sqrt{x}$ to be chosen shortly,  
\[ 
\sum_{n\le x} f_{\widetilde \Gamma}(n) = \sum_{a\le x/z} f(a) \sum_{b\le x/a} \tau_{\widetilde \Gamma}(b) + \sum_{b\le z} \tau_{\widetilde \Gamma}(b) \sum_{x/z \le a \le x/b} f(a). 
\] 
Using our hypothesis on the partial sums of $f$, and since $\sqrt{x} \le x/z \le x/b$, we see that the second term above 
is 
\begin{equation}
\label{eqn:f-tau-1} 
\ll \sum_{b\le z} |\tau_{\widetilde \Gamma}(b)| \frac{x}{b (\log x)^A} \ll \frac{x (\log z)^D}{(\log x)^A}, 
\end{equation}
since $|\tau_{\widetilde \Gamma}(b)|$ may be bounded by the $D$-th divisor function.   
On the other  hand, Lemma \ref{lem:DivisorSums} implies that there is some $\delta=\delta(\tilde{\Gamma})>0$ such that the first term equals 
\begin{equation} 
\label{eqn:f-tau-2} 
\sum_{a\le x/z} f(a) \sum_{j=1}^{k} \frac{x^{1+i\gamma_j}}{a^{1+i\gamma_j}} P_{j,\widetilde\Gamma}(\log x/a) + O\Big( x^{1-\delta} \sum_{a\le x/z} \frac{|f(a)|}{a^{1-\delta}} \Big), 
\end{equation} 
where $P_{j,\widetilde \Gamma}$ denotes a polynomial of degree $\widetilde{m}_j-1$ with coefficients depending on $f$ and $\widetilde \Gamma$.   
Since $|f(a)|$ is bounded by the $D$-th divisor function,
 the error term in \eqref{eqn:f-tau-2}  is easily bounded by $\ll x(\log x)^D/z^{\delta}$.  Now consider the main term in \eqref{eqn:f-tau-2}. Applying  \eqref{L-der-bound1} (with $N=x/z$ and $M\to \infty$ there), for any $0 \le \ell \le m_j-1$ we have 
\[
 \sum_{a \le x/z} \frac{f(a)}{a^{1+i\gamma_j}} (\log a)^\ell = (-1)^{\ell} L^{(\ell)}(1+i\gamma_j) + O_f\Big( \frac{1}{(\log x)^{A-\ell-1} }\Big) \ll_f \frac{1}{(\log x)^{A-\ell-1}}, 
\]
since $L^{(\ell)}(1+i\gamma_j) =0$ for all $0\le \ell \le m_j-1$.  
Therefore 
\[ 
\sum_{a\le x/z} \frac{f(a)}{a^{1+i\gamma_j}} P_{j,\widetilde \Gamma}(\log x/a) \ll \frac{1}{(\log x)^{A-m_j}},  
\]
and we conclude that the quantity in \eqref{eqn:f-tau-2} is 
\[
\ll \frac{x}{(\log x)^{A-\widetilde{m}}} + \frac{x(\log x)^D}{z^{\delta}}.
\]
Combine this with \eqref{eqn:f-tau-1}, and choose $z=\exp((\log \log x)^2)$ to obtain the lemma. 
\end{proof} 

Combining Lemma \ref{lem:f-tau} with the argument of Lemma \ref{lem:L-der-bound}, part (d) of Proposition \ref{Prop1} follows.

\smallskip


\section{Proof of Theorem \ref{converse-thm} in a special case}

In this section we establish Theorem \ref{converse-thm} in the special case when $L(s,f)$ has a 
zero of multiplicity $D$, say at $1+i\gamma$.  By Proposition \ref{Prop1} there can be no other zeros of $L(s,f)$ on the 
$1$-line.   In this special case, we can in fact prove a stronger result,  obtaining non-trivial information in the optimal range $A>D+1$.   
In the next section, we shall consider (by a very different method) the remaining cases when the multiplicity of any 
zero is at most $D-1$.  
 
 Write $g(n) = f(n) n^{-i\gamma}$, and consider $G= \tau_D *g$.   We begin by establishing some 
 estimates for $\sum_{n\le x} G(n)$ and $\sum_{n\le x} |G(n)|/n$.   Note that $G(n) = n^{-i\gamma} f_{\Gamma}$ for the 
 multiset $\Gamma$ composed of $D$ copies of $\gamma$.    Hence, Lemma \ref{lem:f-tau} and partial summation imply that
\eq{eq:g-bound1}{
\sum_{n\le x} G(n) \ll_f \frac{x}{(\log x)^{A-D}}.
}

 By Lemma \ref{lem:logL} we have 
 $$ 
\Big| \sum_{p\le x} \frac{\Re(G(p))}{p} \Big| = \Big| \sum_{p\le x} \frac{D+\Re(g(p))}{p} \Big| = \Big| \sum_{p\le x} \frac{D+ \Re(f(p)p^{-i\gamma})}{p} \Big| 
\ll_f 1. 
$$ 
 Since $|D+g(p)|^2 = D^2+ 2D\Re(g(p)) + |g(p)|^2 \le 2D (D+\Re(g(p)))$, an application of Cauchy-Schwarz gives 
 \begin{equation} 
 \label{N5.2} 
 \sum_{p\le x}\frac{|D+g(p)|}{p} \ll_f \sqrt{\log\log x}. 
 \end{equation} 
 It follows that 
 \begin{equation} 
 \label{N5.3} 
\sum_{n\le x}\frac{|G(n)|}{n} \ll \exp\Big( \sum_{p\le x} \frac{|G(p)|}{p} \Big)   \ll \exp\big(O_f\big(\sqrt{\log\log x}\,\big)\big).
\end{equation}
 
 After these preliminaries, we may now begin the proof of Theorem \ref{converse-thm} in this situation.  
 We shall consider the function $G*\overline{G} = \tau_{2D}*g*\overline{g}$.  Note that $\Lambda_{G*\overline{G}}(n) 
 = 2D \Lambda(n) + \Lambda_g(n) + \Lambda_{\overline{g}}(n)$ is always real and non-negative.  Thus $G*\overline{G}$ is 
 also a real and non-negative function, and we have 
 $$ 
 2\sum_{p\le x} (D+ \Re(g(p))) = \sum_{p\le x} (G*{\overline G})(p) \le \sum_{n\le x} (G*\overline{G})(n). 
 $$ 
 We bound the right side above using the hyperbola method.  Thus, using \eqref{eq:g-bound1} and \eqref{N5.3}, 
 \begin{align*}
\sum_{n\le x} (G*\bar{G})(n) 	
	&= 2\Re\Big(\sum_{a\le \sqrt{x}} G(a) \sum_{b\le x/a} \bar{G}(b) \Big)- \Big|\sum_{a\le \sqrt{x}}G(a)\Big|^2 \\ 
	&\ll_f \frac{x}{(\log x)^{A-D}} \sum_{a\le x} \frac{|G(a)|}{a} + \frac{x}{(\log x)^{2(A-D)}} \ll_{f,\eps} \frac{x}{(\log x)^{A-D-\eps}}
\end{align*}
for any fixed $\eps>0$. Thus 
\[
\sum_{p\le x} \big(D+ \Re(g(p)) \big)
	 \ll_{f,\eps} \frac{x}{(\log x)^{A-D-\eps}}, 
\]
 and using $|D+g(p)|^2 \le 2D (D+\Re(g(p)))$ and Cauchy-Schwarz we conclude that 
 \begin{equation} 
 \label{N5.4} 
 \sum_{p\le x} |D+ f(p)p^{-i\gamma}| \log p = \sum_{p\le x} |f(p) + Dp^{i\gamma}| \log p 
			 \ll_{f,\eps} \frac{x}{(\log x)^{(A-1-D-\eps)/2}}.
 \end{equation}

 Once the estimate \eqref{N5.4} has been established, it may be input into the above argument and the bound \eqref{N5.4} 
 may be tidied up.  Partial summation starting from \eqref{N5.4} leads to the bound $\sum_{p\le x} |D+g(p)|/p \ll_f 1$ in place of 
 \eqref{N5.2}.  In turn this replaces \eqref{N5.3} by the bound $\sum_{n\le x} |G(n)|/n \ll_f 1$.  Using this in our hyperbola 
 method argument produces now the cleaner bound 
 \begin{equation} 
 \label{N5.5} 
  \sum_{p\le x} |D+ f(p)p^{-i\gamma}| \log p = \sum_{p\le x} |f(p) + Dp^{i\gamma}| \log p \ll_f \frac{x}{(\log x)^{(A-1-D)/2}}.
 \end{equation}

 As mentioned earlier, the estimate \eqref{N5.5} obtains non-trivial information in the optimal range $A>D+1$.  
 If we suppose that $A> D+2$, then the right side of \eqref{N5.5} is $\ll_f x/\sqrt{\log x}$, and Theorem \ref{converse-thm} 
 follows in this special case if $|\gamma|\le T$.  If $|\gamma| >T$ then note that 
 $$ 
 \sum_{p\le x} p^{i\gamma} \log p = \frac{x}{1+i\gamma} + O_f\Big( \frac{x}{\log x}\Big) \ll \frac{x}{T} + C(f) \frac{x}{\log x}, 
$$ 
so that the theorem holds as stated in this case also.


\section{Proof of  Theorem \ref{converse-thm}:  The general case} 
 
In the previous section we established Theorem \ref{converse-thm} in the special situation when $L(s,f)$ 
 has a zero of multiplicity $D$ on the $1$-line.  We now consider the more typical situation when all the zeros (if there are any)
 of $L(s,f)$ on the line $\Re(s)=1$ have multiplicity $\le D-1$.   The argument here is based on some ideas from \cite{GHS}.

 Throughout we put $c=1+1/\log x$, and $T_0 =\sqrt{T}$.  Let ${\widetilde \Gamma}$ denote 
the multiset of zeros of $L(s,f)$ lying on the line segment $[1-iT, 1+iT]$, and let $f_{\widetilde \Gamma} = f * \tau_{\widetilde \Gamma}$ denote 
the multiplicative function defined for Lemma \ref{lem:f-tau}. We start with a smoothed version of Perron's formula: 
\begin{align} 
\label{4.1} 
\frac{1}{2\pi i} \int_{(c)} \bigg(-\frac{L^{\prime}}{L}\bigg)^{\prime}(s,f_{\widetilde \Gamma}) \frac{x^s}{s}
	 \bigg(\frac{e^{s/T_0}-1}{s/T_0}\bigg)^{10} \dee s 
	 &= \sum_{n\le x} \Lambda_{f_{\widetilde \Gamma}}(n) \log n 
		+ O\Big( \sum_{x < n < e^{10/T_0} x} \Lambda(n) \log n\Big) \nonumber\\ 
	&= \sum_{p\le x} \Big(f(p) + \sum_{\substack{\gamma \in \Gamma \\ |\gamma| \le T}} p^{i\gamma}\Big) (\log p)^2  
		+ O\Big( \frac{x\log x}{T_0}\Big). 
\end{align}  
Our goal now is to bound the left-hand side of \eqref{4.1}, and to do this we split the integral into several ranges.   There is a range of 
small values $|t| \le T$, and the range of larger values $|t| >T$, 
which we further subdivide into dyadic ranges $2^rT  < |t| \le 2^{r+1} T$ with $r\ge 0$.  

\subsection{Small values of $|t|$}  

We start with the range $|t|\le T$. Since $A>D+2$ and all zeros of $L(s,f)$ are assumed to have multiplicity $\le D-1$, we have $A-{\widetilde m} -1 >2$.  Therefore, by Proposition \ref{Prop1}(d)  $L^{(j)}(s,f_{\widetilde \Gamma})$ exists for $\Re(s)\ge 1$ and $j\in\{0,1,2\}$, and is  bounded above in magnitude on the segment $[1-iT,1+iT]$.  
Further, $|L(s,f_{\widetilde \Gamma})|$ is bounded away from zero on the compact set $[1-iT, 1+iT]$ since all the zeros of $L(s,f)$ in that region 
are accounted for in the multiset $\widetilde \Gamma$.   Therefore there is some constant $C(f,T)$ such that for all $|t|\le T$ one has 
\[ 
\bigg| \bigg(\frac{L^{\prime}}{L}\bigg)^{\prime}(c+it,f_{\widetilde \Gamma}) \bigg| \le C(f,T). 
\]
We deduce that 
\begin{equation} 
\label{eqn:perron-bound1} 
\bigg| \int_{\sigma=c,\, |t|\le T}\bigg(\frac{L^{\prime}}{L}\bigg)^{\prime}(s,f_{\widetilde \Gamma}) 
	\cdot \frac{x^s}{s} \cdot 
		\bigg( \frac{e^{s/T_0}-1}{s/T_0}\bigg)^{10} \dee t \bigg|
				 \le x C_1(f,T), 
\end{equation}  
for a suitable constant $C_1(f,T)$.  

\subsection{Large values of $|t|$}    Now we turn to the larger values of $|t|$, namely when $2^r T < |t| \le 2^{r+1} T$ for some $r\ge 0$.  Writing 
\[
\bigg( \frac{L^{\prime}}{L}\bigg)^{\prime}(s,f_{\widetilde \Gamma}) 
		= \bigg( \frac{L^{\prime}}{L} \bigg)^{\prime}(s,f) 
		+ \sum_{\gamma \in \widetilde\Gamma} \bigg( \frac{\zeta^{\prime}}{\zeta} \bigg)^{\prime}(s-i\gamma) ,  
\] 
the desired integral splits naturally into two parts.   Now for $|t| \ge T$ we have 
\[ 
\bigg( \frac{\zeta^{\prime}}{\zeta} \bigg)'(c+it -i\gamma) 
		\ll \Big( \frac{1}{(\log x)^2} + |t-\gamma|^2 \Big)^{-1} + |t|^{\eps}, 
\] 
so that 
\begin{equation}
\label{4.12}
\bigg| \int\limits_{\substack{\sigma=c \\ 2^{r}T \le |t| \le 2^{r+1}T}} 
	\sum_{\gamma\in \widetilde\Gamma}
	\bigg( \frac{\zeta^{\prime}}{\zeta} \bigg)^{\prime}(s-i\gamma)
	\cdot 	\frac{x^s}{s} \cdot 
		\bigg( \frac{e^{s/T_0}-1}{s/T_0} \bigg)^{10} \dee s \bigg| 
\ll \frac{x\log x}{2^r T_0}.
\end{equation}

It remains now to estimate 
\begin{equation} 
\label{4.13} 
\bigg| \int\limits_{\substack{\sigma=c \\ 2^{r}T \le |t| \le 2^{r+1}T}}  
	\bigg(\frac{L^{\prime}}{L}\bigg)^{\prime}(s,f)
		\cdot \frac{x^s}{s}
		\cdot \bigg(\frac{e^{s/T_0}-1}{s/T_0}\bigg)^{10} \dee s \bigg|. 
\end{equation} 
To help estimate this quantity, we state the following lemma whose proof we postpone to 
the next section.

\begin{lem} 
\label{lem2.5}  Let $X \ge 2$ and $\sigma >1$ be real numbers. Let $f\in {\mathcal F}(D)$ and suppose $j\ge 1$ is a natural number.  
Put $G_j(s) = (-1)^j L^{(j)}(s,f)/L(s,f)$.   Then 
$$ 
\int_{-X}^X |G_j(\sigma +it)|^2 \dee t 
\ll_j X (\log X)^{2j} + \Big( \frac{1}{\sigma -1} \Big)^{2j-1}. 
$$ 
\end{lem}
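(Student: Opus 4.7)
The plan is to analyze $G_j$ via its Dirichlet series representation and then apply Gallagher's mean-value lemma to convert the $L^2$-integral on a vertical line into an integral over short-interval sums that can be controlled using Lemma~\ref{lem:BT}.

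First I write $G_j(s) = \sum_n c_j(n)/n^s$ as a Dirichlet series and show that $|c_j(n)| \le D^j \Lambda_j(n)$. This follows by induction on $j$: the identity $L^{(j+1)}/L = (L^{(j)}/L)' + (L^{(j)}/L)(L'/L)$ yields the recurrence
\[
c_{j+1}(n) = c_j(n) \log n + \sum_{ab = n} \Lambda_f(a) c_j(b),
\]
which mirrors the recurrence $\Lambda_{j+1}(n) = \Lambda_j(n) \log n + \sum_{ab = n} \Lambda(a) \Lambda_j(b)$ coming from \eqref{N3.1}. Combined with $|\Lambda_f| \le D \Lambda$ and the base case $|c_1(n)| = |\Lambda_f(n)| \le D \Lambda(n)$, a straightforward induction gives the claimed coefficient bound.

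Next I apply Gallagher's mean-value lemma, which (for an absolutely convergent Dirichlet series) yields
\[
\int_{-X}^X \bigg| \sum_n \frac{a_n}{n^{it}} \bigg|^2 dt \ll X^2 \int_0^\infty \bigg| \sum_{n \in I_y} a_n \bigg|^2 \frac{dy}{y}, \qquad I_y := [ye^{-1/(2X)}, ye^{1/(2X)}].
\]
Taking $a_n = c_j(n)/n^\sigma$ and using that $n \asymp y$ on $I_y$, the inner sum is bounded by $(D^j/y^\sigma) \sum_{n \in I_y} \Lambda_j(n)$. I then split the $y$-integral into three ranges. For $y \le X$, the interval $I_y$ has length at most $1$ and contains at most one integer, so
\[
\int_0^X \bigg| \sum_{n \in I_y} \frac{c_j(n)}{n^\sigma} \bigg|^2 \frac{dy}{y} \le \frac{1}{X} \sum_n \frac{|c_j(n)|^2}{n^{2\sigma}} \ll_{j,D} \frac{1}{X},
\]
using that $\sum_n \Lambda_j(n)^2/n^{2\sigma}$ converges for $\sigma \ge 1$ (the summand vanishes unless $\omega(n) \le j$, and is then at most $C_j^2 (\log n)^{2j}/n^2$); this contributes $\ll X$ after multiplying by $X^2$. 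For the tail $y \ge X^2$, Lemma~\ref{lem:BT} with $\eps = 1/2$ gives $\sum_{n \in I_y} \Lambda_j(n) \ll (y/X)(\log y)^{j-1}$, so after the substitutions $v = \log y$ and $u = 2(\sigma-1)v$ the contribution is
\[
\ll \frac{D^{2j}}{X^2} \cdot X^2 \int_{2 \log X}^\infty v^{2j-2} e^{-2(\sigma - 1) v} \, dv \ll_j \frac{1}{(\sigma-1)^{2j-1}}.
\]
For the intermediate range $X \le y \le X^2$, the trivial bound $\sum_{n \in I_y} \Lambda_j(n) \ll (y/X)(\log y)^j$ gives a contribution of $\ll (\log X)^{2j+1}$, which is absorbed into $X(\log X)^{2j}$ because $\log X \le X$ for $X \ge 2$.

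The main technical obstacle is obtaining the sharp exponent $2j - 1$ (rather than $2j$) on $1/(\sigma-1)$. A direct application of the Montgomery--Vaughan mean-value inequality produces $\int_{-X}^X |G_j(\sigma+it)|^2 \, dt \ll X \sum_n |c_j(n)|^2/n^{2\sigma} + \sum_n |c_j(n)|^2/n^{2\sigma-1}$, where the second sum is of order $1/(\sigma-1)^{2j}$, one power too large. Gallagher's lemma is sharper because it effectively replaces the diagonal second-moment sum of $|c_j(n)|^2$ with a short-interval first-moment sum, for which Lemma~\ref{lem:BT} provides exactly the savings required to match the correct pole-integral $\int_{-\infty}^\infty dt/((\sigma-1)^2 + t^2)^j \asymp 1/(\sigma-1)^{2j-1}$.
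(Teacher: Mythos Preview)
Your proof is correct and relies on the same two ingredients as the paper's argument: the inductive bound $|c_j(n)|\le D^j\Lambda_j(n)$ (which you derive exactly as the paper does, via the recurrence \eqref{2.5} compared with \eqref{N3.1}), and the short-interval estimate of Lemma~\ref{lem:BT} to extract the exponent $2j-1$ from the tail.  The organisation differs slightly: the paper first splits the Dirichlet series at $n=X^2$, bounds the head pointwise by $\sum_{n\le X^2}\Lambda_j(n)/n\ll(\log X)^j$, and handles the tail by inserting the majorant $\Phi(t)=(\sin t/t)^2$ and expanding the square --- i.e.\ it writes out Gallagher's lemma by hand.  You instead quote Gallagher's lemma directly for the full series and split the resulting $y$-integral into three ranges.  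Both routes are standard and essentially equivalent; your packaging is a bit cleaner, while the paper's is more self-contained.  One small imprecision: for $y$ very close to $X$ the interval $I_y$ can have length marginally above $1$ and might contain two integers rather than one, but this only changes the bound by a constant factor and does not affect the argument.
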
 

Returning to \eqref{4.13},  in the notation of Lemma \ref{lem2.5}, we have 
\[ 
\bigg(\frac{L^{\prime}}{L}\bigg)^{\prime}(s,f)
	 = G_2(s) - G_1(s)^2. 
\]   
Using this identity, the integral in \eqref{4.13} splits into two parts, and using Lemma \ref{lem2.5} we may bound the 
second integral (with $X=2^{r}T$) by 
\begin{align}
\label{4.2} 
 \frac{x}{X (X/T_0)^{10}} \int_{X< |t| \le 2X} |G_1(c+it)|^2 \dee t 
 	& \ll \frac{x}{X(X/T_0)^{10}} \Big( X (\log X)^2 + \log x \Big)
		\ll \frac{x \log x}{2^r T_0}. 
 \end{align} 
 
Finally, we must bound the integral arising from $G_2(s)$.  To this end, we define
\begin{equation} 
\label{4.3} 
I(j;X,\alpha) = \Big| \int\limits_{\substack{\sigma=c \\ X \le |t| \le 2X}}  
	 G_j(s+\alpha) \frac{x^s}{s} \Big( \frac{ e^{s/T_0}-1}{s/T_0} \Big)^{10} \dee s \Big|, 
\end{equation} 
so that we require a bound for $I(2;2^rT,0)$.  We shall bound $I(j;X,\alpha)$ in terms of $I(j+1;X, \alpha+\beta)$ for suitable 
$\beta>0$, and iterating this will eventually lead to a good bound for $I(2;X,0)$.  

 \begin{lem}\label{lem4.1}   Let $X\ge T$, and $\alpha\ge 0$ be real numbers.   For $j\ge 2$ and all $k\ge 1$ we have 
\[
  I(j;X,\alpha) \ll_k \int_0^1  I(j+k;X,\alpha+\beta)\beta^{k-1} \dee\beta + \frac{x}{(X/T_0)^2}  
  \Big(\frac{1}{\log x} +\alpha\Big)^{-(j-1)} . 
 \]
 \end{lem}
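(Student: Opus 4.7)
The plan is to argue by induction on $k$, establishing the base case $k=1$ from a differentiation identity and then iterating. Differentiating $G_j=(-1)^j L^{(j)}/L$ and using $G_1 = -L'/L$ gives the pointwise identity
\[
\frac{d}{dw} G_j(w) = G_j(w)G_1(w) - G_{j+1}(w).
\]
Integrating in $w=s+\alpha+\beta$ over $\beta\in[0,1]$ yields
\[
G_j(s+\alpha) = G_j(s+\alpha+1) + \int_0^1 G_{j+1}(s+\alpha+\beta)\, d\beta - \int_0^1 G_j(s+\alpha+\beta)G_1(s+\alpha+\beta)\, d\beta.
\]
Plugging this into the contour integral defining $I(j;X,\alpha)$ and applying the triangle inequality gives
\[
I(j;X,\alpha) \le I(j;X,\alpha+1) + \int_0^1 I(j+1;X,\alpha+\beta)\, d\beta + \int_0^1 \Bigl|\int G_j G_1(s+\alpha+\beta) F(s)\, ds\Bigr|\, d\beta,
\]
where $F(s)=\frac{x^s}{s}\bigl(\frac{e^{s/T_0}-1}{s/T_0}\bigr)^{10}$ is the standard weight.

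The first summand is easy: on the shifted contour $\Re(w)\ge 2$, so $|G_j(w)|\ll_j 1$, while $|F(c+it)|\ll xT_0^{10}/|t|^{11}$ on $|t|\in[X,2X]$, giving $I(j;X,\alpha+1)\ll xT_0^{10}/X^{10}$, which is absorbed into the stated error because $X\ge T=T_0^2$. The second is the desired main term. The principal obstacle is the third ``cross'' term. I would bound it by Cauchy--Schwarz in $t$, writing
\[
\Bigl|\int G_j G_1 F\, ds\Bigr| \le \max|F|\cdot \Bigl(\int|G_j|^2\, dt\Bigr)^{1/2}\Bigl(\int|G_1|^2\, dt\Bigr)^{1/2}
\]
and applying Lemma~\ref{lem2.5} with $\sigma=c+\alpha+\beta$ to both factors. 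A careful case-split according to whether the term $X(\log X)^{2j}$ or $(1/\log x+\alpha+\beta)^{-(2j-1)}$ dominates in Lemma~\ref{lem2.5}, combined with the integration $\int_0^1(1/\log x+\alpha+\beta)^{-j}\, d\beta\ll(1/\log x+\alpha)^{-(j-1)}/(j-1)$ and the pointwise bound $\max|F|\ll xT_0^{10}/X^{11}$, produces the required error estimate $\frac{x}{(X/T_0)^2}(1/\log x+\alpha)^{-(j-1)}$; the condition $X\ge T$ is essential here in order to absorb surplus factors of $T_0/X$.

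For general $k\ge 1$, iterate the $k=1$ case $k$ times. The main term after iteration becomes $\int_{[0,1]^k} I(j+k;X,\alpha+\beta_1+\cdots+\beta_k)\, d\beta_1\cdots d\beta_k$. Passing to the single variable $\tau=\sum_i\beta_i$ introduces the Irwin--Hall density $\rho_k(\tau)$, which equals $\tau^{k-1}/(k-1)!$ on $[0,1]$ and is $O_k(1)$ on $[1,k]$. The contribution from $\tau\ge 1$ is negligible because $G_{j+k}(s+\alpha+\tau)$ decays exponentially in $\tau$ (the $n=2$ term of $L^{(j+k)}(s+\alpha+\tau)$ is $O(2^{-\tau})$ while $|L|$ stays bounded away from $0$). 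The cumulative errors from the $k$ applications remain $O_k\bigl(\tfrac{x}{(X/T_0)^2}(1/\log x+\alpha)^{-(j-1)}\bigr)$, because each auxiliary error $(1/\log x+\alpha+\tau)^{-(j+\ell-1)}$ integrated against $\rho_\ell(\tau)$ is bounded by a constant times $(1/\log x+\alpha)^{-(j-1)}$, completing the proof.
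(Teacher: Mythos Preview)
Your proposal is correct and follows essentially the same approach as the paper: the same differentiation identity for $G_j$, the same integration over $\beta\in[0,1]$, the same Cauchy--Schwarz plus Lemma~\ref{lem2.5} treatment of the cross term, and the same inductive iteration for general $k$. Your handling is in fact slightly more explicit than the paper's in two places---you spell out the boundary term $G_j(s+\alpha+1)$ (which the paper simply absorbs into an $O(1)$) and you describe the Irwin--Hall density arising in the iterated main term---but these are expository refinements rather than a different route.
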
 

\begin{proof} 
 Note that 
\[
  G_{j}(s)^{\prime} = - G_{j+1} (s) + G_1(s) G_j(s), 
\]
 so that 
  \begin{align*}
  G_j(s+\alpha) & = - \int_{0}^{1} G_{j}^{\prime}(s+\alpha+\beta) \dee\beta + O(1) \\
  &= \int_0^1 (G_{j+1}(s+\alpha+\beta) -G_1(s+\alpha+\beta) G_j(s+\alpha+\beta)) \dee\beta + O(1). 
 \end{align*} 
 Using this in the definition of $I$, we obtain that 
  \begin{align*} 
  I(j;X,\alpha) &\ll \int_{0}^{1} I(j+1;X,\alpha+\beta) \dee\beta \\
  &+   \frac{x}{X(X/T_0)^{10}} \Big( X + \int_0^1 
  \int_{X}^{2X}  |G_1(c+\alpha+\beta+ it) G_j(c+\alpha+\beta+it)| \dee t \dee\beta\Big). 
  \end{align*} 
  
Using Cauchy--Schwarz and Lemma \ref{lem2.5}, the second term above is (since $j\ge 2$) 
  \begin{align*}
&\ll \frac{x}{X(X/T_0)^{10}} \Big( X + X (\log X)^{j+1} + \int_0^{1} \Big(\frac{1}{\log x} + \alpha +\beta\Big)^{-j} \dee\beta \Big)   \\
&\ll \frac{x}{(X/T_0)^2} \Big( \frac{1}{\log x} + \alpha\Big)^{-(j-1)}. 
 \end{align*} 
Thus we conclude that 
\begin{equation} 
\label{4.4}  
I(j;X,\alpha)  \ll \int_{0}^{1} I(j+1;X,\alpha+\beta) \dee\beta + \frac{x}{(X/T_0)^2} \Big( \frac{1}{\log x} + \alpha\Big)^{-(j-1)}.
\end{equation} 

This establishes the lemma in the case $k=0$, and the general case  follows by iterating this argument $k-1$ times. In doing so, we make use of the following estimate:
\begin{equation}\label{eqn:int-bound}
\int_0^1 \frac{\beta^{m-1}}{(\alpha+\beta+1/\log x)^{j+m-1}}\dee\beta \ll_{m,j} \frac{1}{(\alpha+1/\log x)^{j-1}}
\end{equation}
for all $m=1,2,\dots$ and all $\alpha\in[0,1]$.  This may be seen by dividing the range of integration into two parts, according to whether $\beta\le \alpha+1/\log x$ or $\beta>\alpha+1/\log x$. \end{proof} 

We now return to the task of bounding $I(2;2^r T, 0)$.  Applying Lemma \ref{lem4.1}, we see that for any $k\ge 1$  we have
\begin{equation}
\label{4.5} 
I(2;2^r T,0) \ll_k \int_0^1 I(2+k;2^rT, \beta) \beta^{k-1} \dee\beta + \frac{x \log x}{2^r T_0}.
\end{equation} 
We choose $k$ to be the largest integer strictly smaller than $A-3$. Since $A>D+2$, we have
\[
D-1\le k<A-3.
\]
Applying Lemma \ref{lem:L-der-bound}, we find that $L^{(2+k)}(c+\beta+it,f) \ll (1+|t|)$.  
Furthermore, since $f \in {\mathcal F}(D)$, we have
\[
\frac{1}{|L(c+\beta+it,f)|} \ll \prod_{p} \Big(1 -\frac{1}{p^{c+\beta}}\Big)^{-D} \ll \Big( \frac{1}{\log x} + \beta\Big)^{-D}. 
\] 
Thus, with this choice of $k$, it follows that 
\[
I(2+k;2^r T, \beta) \ll x \Big( \frac{1}{\log x} + \beta\Big)^{-D}   \int_{2^r T}^{2^{r+1}T} \frac{1}{(T/T_0)^{10}} \dee t
	 \ll \frac{x}{2^r T_0} \Big( \frac{1}{\log x} +\beta\Big)^{-D} .
\]
Since $k\ge D-1$, we infer that
\[
\int_0^1 I(2+k;2^r T, \beta) \beta^{k-1} \dee \beta   \ll \frac{x\log x}{2^r T_0},
\]
by  a similar argument to the one leading to \eqref{eqn:int-bound}. 
In conclusion,
\[
I(2;2^{r} T, 0 ) \ll \frac{x\log x}{2^r T_0}. 
\] 
Combining this with \eqref{4.2}, and summing over all $r\ge 0$, we obtain 
\begin{equation} 
\label{4.6} 
\bigg| \int_{\sigma=c,\, |t|>T} \bigg(\frac{L^{\prime}}{L}\bigg)'(s,f) \cdot \frac{x^s}{s} \cdot \bigg( \frac{e^{s/T_0} -1}{s/T_0} \bigg)^{10} 
	\dee s \bigg| \ll \frac{x\log x}{T_0}. 
\end{equation} 

Combining \eqref{4.6} with \eqref{4.12} summed over all $r$, we conclude that 
\begin{equation} 
\label{4.10}
\bigg| \int_{\sigma=c,\, |t|>T} \bigg(\frac{L^{\prime}}{L}\bigg)'(s,f_{\widetilde \Gamma}) \cdot \frac{x^s}{s} \cdot 
\bigg( \frac{e^{s/T_0} -1}{s/T_0} \bigg)^{10} 
\dee s \bigg| \ll \frac{x\log x}{T_0}. 
\end{equation} 

\subsection{Completing the proof.}  Combining \eqref{4.10} with \eqref{4.1} and \eqref{eqn:perron-bound1}, it follows that 
$$ 
\sum_{p\le x} \Big( f(p) + \sum_{\substack{ \gamma \in \Gamma \\ |\gamma| \le T}} p^{i\gamma} \Big) (\log p)^2 \le xC_1(f,T) + O\Big( \frac{x\log x}{T_0} \Big).   
$$ 
Partial summation now finishes the proof of Theorem \ref{converse-thm}. 

%
%
%
%

\section{Proof of Lemma \ref{lem2.5}} 


Write $g_j(n)$ for the Dirichlet series coefficients of $G_j(n)$.  
We claim that $|g_j(n)| \le D^j \Lambda_j(n)$ for all $n$.  For $j=1$ this is just the definition of the class ${\mathcal F}(D)$.  
To see the claim in general, we use induction on $j$, noting that 
\begin{equation} 
\label{2.5} 
G_{j+1}(s) = - G_j'(s) + G_1(s) G_j(s), 
\end{equation} 
and now comparing this with \eqref{N3.1}.  Using this bound for $|g_j(n)|$, we have 
\[
\bigg| \sum_{n\le X^2} \frac{g_j(n)}{n^{c+it}}\bigg|\le \sum_{n\le X^2} \frac{D^j\Lambda_j(n)}{n}  \ll_j (\log X)^j, 
\]
so that 
\[ 
\int_{-X}^{X} \bigg| \sum_{n\le X^2} \frac{g_j(n)}{n^{c+it}}\bigg|^2 \dee t  \ll_j X (\log X)^{2j}.
\]

Next, putting $\Phi(x) = (\frac{\sin x}{x})^2$ so that the Fourier transform ${\widehat \Phi}(x)$ is supported on $[-1,1]$, 
we obtain 
\begin{align*}
\int_{-X}^{X} \bigg| \sum_{n > X^2} \frac{g_j(n)}{n^{c+it}}\bigg|^2 \dee t 
	&\ll \int_{-\infty}^{\infty} \Big| \sum_{n>X^2} \frac{g_j(n)}{n^{c+it}}\Big|^2 \Phi\Big( \frac{x}{X} \Big) \dee x  \\
	&\ll \sum_{m,n>X^2} \frac{\Lambda_j(m)\Lambda_j(n)}{(mn)^c} X \widehat{\Phi}(X \log (m/n)). 
\end{align*}
Since ${\widehat \Phi}$ is supported on $[-1,1]$ for a given $m >X^2$, the sum over $n$ is restricted to the range $|m-n| \ll m/X$, and so, using the variant of the Brun-Titchmarsh theorem Lemma \ref{lem:BT}, we deduce that the above is 
\[ 
\ll X \sum_{m >X^2} \frac{\Lambda_j(m)}{m^{2c}} \sum_{|m-n|\ll m/X} \Lambda_j(n) 
\ll_j \sum_{m >X^2} \frac{\Lambda_j(m)}{m^{2c}} m (\log m)^{j-1} 
\ll_j \Big( \frac{1}{c-1} \Big)^{2j-1}. 
\]
Lemma \ref{lem2.5} follows upon combining these two estimates.

\bibliographystyle{alpha}

\begin{dajauthors}
	\begin{authorinfo}[dimitris]
		Dimitris Koukoulopoulos\\
		Universit\'e de Montr\'eal\\
		CP 6128 succ. Centre-Ville\\
		Montr\'eal, QC H3C 3J7\\
		Canada\\
		koukoulo\imageat{}dms\imagedot{}umontreal\imagedot{}ca\\
	\url{https://www.dms.umontreal.ca/~koukoulo}
	\end{authorinfo}
	\begin{authorinfo}[sound]
		K. Soundararajan\\
		Stanford University \\
		450 Serra Mall, Building 380  \\
		Stanford, CA 94305-2125 \\
		USA\\
		ksound\imageat{}stanford\imageat{}edu\\
		\url{http://math.stanford.edu/~ksound/}
	\end{authorinfo}
\end{dajauthors}

\end{document}